\newtheorem{thm}{Theorem}
\newtheorem{prop}[thm]{Proposition}
\newtheorem{lem}[thm]{Lemma}
\numberwithin{equation}{section}
\numberwithin{thm}{section}
\newcommand{\ZZ}{\mathbf{Z}}
\newcommand{\QQ}{\mathbf{Q}}
\newcommand{\CC}{\mathbf{C}}
\newcommand{\FF}{\mathbf{F}}
\newcommand{\NN}{\mathbf{N}}
\newcommand{\TT}{\mathbf{T}}
\newcommand{\PP}{\mathbf{P}}
\newcommand{\RR}{\mathbf{R}}
\newcommand{\LL}{\mathcal{L}}
\newcommand{\norm}[1]{\Vert #1 \Vert}
\newcommand{\divides}{\mathrel |}
\newcommand{\ndivides}{\mathrel \nmid}
\newcommand{\eps}{\varepsilon}
\newcommand{\red}[1]{\bar{#1}}
\DeclareMathOperator{\trace}{tr}
\DeclareMathOperator{\Proj}{Proj}
\DeclareMathOperator{\Spec}{Spec}
\title{Computing zeta functions of arithmetic schemes}
\author{David Harvey}
\begin{document}

\maketitle

\begin{abstract}
We present new algorithms for computing zeta functions of algebraic varieties over finite fields.
In particular, let~$X$ be an arithmetic scheme (scheme of finite type over~$\ZZ$), and for a prime~$p$ let $\zeta_{X_p}(s)$ be the local factor of its zeta function.
We present an algorithm that computes $\zeta_{X_p}(s)$ for a single prime~$p$ in time $p^{1/2+o(1)}$, and another algorithm that computes $\zeta_{X_p}(s)$ for all primes $p < N$ in time $N \log^{3+o(1)} N$.
These generalise previous results of the author from hyperelliptic curves to completely arbitrary varieties.
\end{abstract}

\section{Introduction}
\label{sec:intro}

Let~$X$ be an arithmetic scheme, i.e., a scheme of finite type over~$\ZZ$.
Loosely speaking, this is an object defined locally by polynomial equations in finitely many variables over~$\ZZ$.
The zeta function of such a scheme, introduced by Serre~\cite{Ser-zeta-L}, is defined by
 \[ \zeta_X(s) = \prod_x \frac{1}{1 - N(x)^{-s}}, \]
where the product is taken over all closed points $\{x\}$ of~$X$, and where $N(x)$ denotes the cardinality of the residue field of~$x$.
It admits an Euler product
 \[ \zeta_X(s) = \prod_p \zeta_{X_p}(s) \]
where $X_p = X \times_{\ZZ} \ZZ/p\ZZ$ is the reduction of~$X$ modulo~$p$ for each prime~$p$.
The local factors have the form $\zeta_{X_p}(s) = Z_{X_p}(p^{-s})$ where $Z_{X_p}(T) \in 1 + T \ZZ[[T]]$ is a rational function of~$T$.
To compute $\zeta_{X_p}(s)$ means to find the numerator and denominator of $Z_{X_p}(T)$ as polynomials in $\ZZ[T]$.
The main result of this paper is the following.
\begin{thm}
\label{thm:scheme}
Let $X$ be an arithmetic scheme.
\begin{enumerate}[(a)]
\item There exists a deterministic algorithm that takes as input a prime~$p$, and outputs $\zeta_{X_p}(s)$.
It has time complexity $p \log^{1+\eps} p$ and space complexity $O(\log p)$.
\item There exists a deterministic algorithm that takes as input a prime~$p$, and outputs $\zeta_{X_p}(s)$.
It has time complexity $p^{1/2} \log^{2+\eps} p$ and space complexity $O(p^{1/2} \log p)$.
\item There exists a deterministic algorithm that takes as input an integer $N \geq 2$, and outputs $\zeta_{X_p}(s)$ for all primes $p < N$.
It has time complexity $N \log^{3+\eps} N$ and space complexity $O(N \log^2 N)$.
\end{enumerate}
\end{thm}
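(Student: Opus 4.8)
The plan is to reduce the problem to counting points on affine schemes over finite fields, since from point counts $\#X_p(\FF_{p^k})$ for enough values of $k$ one recovers $Z_{X_p}(T)$ by the standard formula $Z_{X_p}(T) = \exp\bigl(\sum_{k \geq 1} \#X_p(\FF_{p^k}) T^k / k\bigr)$ together with the fact that $Z_{X_p}$ is rational of controlled degree (bounded purely in terms of $X$, by Dwork/Bombieri-type bounds). First I would fix an affine cover $X = \bigcup_i U_i$ by finitely many affine schemes $U_i = \Spec \ZZ[x_1,\dots,x_n]/I_i$, and use inclusion–exclusion over the cover: $\#X_p(\FF_q) = \sum_{\emptyset \neq S} (-1)^{|S|+1} \#U_S(\FF_q)$ where $U_S = \bigcap_{i \in S} U_i$ is again affine. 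This is all independent of $p$, so the combinatorial data is absorbed into the $o(1)$ / $\eps$ overhead, and it suffices to count $\FF_{p^k}$-points on a single affine scheme $\Spec \FF_p[x_1,\dots,x_n]/I$ for $k = 1, 2, \dots, \kappa$ where $\kappa$ depends only on $X$.

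Next I would handle the point count on an affine scheme over $\FF_q$ ($q = p^k$) by brute-force enumeration: run over all $q^n$ tuples $(a_1,\dots,a_n) \in \FF_q^n$ and test membership in the variety by evaluating a fixed finite set of generators of $I$. For part (a), take $q = p$ (and the finitely many $q = p^k$ for $k \le \kappa$, which only changes constants): this costs $p^n \cdot \log^{1+\eps} p$ bit operations and $O(\log p)$ space, but to hit the claimed bound $p\log^{1+\eps}p$ I must first cut the ambient dimension down to $n = 1$. This is the technical heart: using elimination theory over $\ZZ$, or a Noether-normalization-style argument, one writes the count on an $n$-dimensional affine scheme as a sum of counts of fibres of a projection to the affine line (or to a point), each fibre being a zero-dimensional scheme whose $\FF_q$-point count is computed by a gcd of a fixed polynomial with $x^q - x$ over $\FF_q$ — i.e., in time $q \log^{1+\eps} q$ by fast modular composition / the usual $O(\log q)$ squarings of $x^q$. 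Summing over $q$ base points gives $q^2\log^{1+\eps} q$, so in fact I would fibre all the way down so that only a one-parameter family remains, giving $q \cdot q^{1/2+\eps}$? — no: the correct route is Harvey's observation that the zeta function, equivalently the relevant exponential sums, can be written as a single trace that is computed by one pass; concretely one reduces to computing, for a fixed polynomial $f$ over $\FF_p$, quantities like $\sum_{a} \chi(f(a))$ or $\#\{a : f(a) = 0\}$ in time $p\log^{1+\eps}p$, and iterates this a bounded number of times.

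For part (b), I would apply the author's "average polynomial time / baby-step giant-step" technique to the one-dimensional reduced problem: the point count, or the trace of Frobenius on the relevant cohomology, is expressible as a product of $p^{1/2}$-many small matrices (transfer matrices indexed by $x \in \FF_p$), and this product — equivalently a sum of $p$ terms with a multiplicative recurrence — is evaluated in $p^{1/2}\log^{2+\eps}p$ time and $p^{1/2}\log p$ space by the standard BSGS/partial-products trick, exactly as in the hyperelliptic case. For part (c), I would run the accumulating-remainder-tree ("average polynomial time") algorithm: the same transfer-matrix product, now as $p$ ranges over all primes below $N$, is assembled simultaneously by a product tree over the interval $[1,N)$ with remaindering at each node modulo the running product of small primes, yielding amortized $\log^{3+\eps} N$ per prime, hence $N\log^{3+\eps}N$ total and $O(N\log^2 N)$ space. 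In all three parts, the final step is the same bookkeeping: from the bounded list of point counts reconstruct $Z_{X_p}(T)$ via the exponential formula and a rational-function recovery (Berlekamp–Massey) with denominator and numerator degrees bounded in terms of $X$ alone.

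The main obstacle I anticipate is the dimension-reduction step for part (a): making precise, uniformly in $p$, how to replace an arbitrary affine $\ZZ$-scheme by a one-parameter problem over $\FF_p$ whose $\FF_p$-point count (and $\FF_{p^k}$-point counts for bounded $k$) can be read off in time $p\log^{1+\eps}p$ — this requires an effective elimination/fibration argument whose combinatorial cost is independent of $p$, and it is also the step whose one-dimensional output is exactly what feeds the BSGS and average-polynomial-time machinery in (b) and (c). Once that reduction is in place, parts (b) and (c) are direct applications of the transfer-matrix / remainder-tree framework already developed by the author.
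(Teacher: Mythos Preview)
Your decomposition step is close to the paper's (affine cover plus inclusion--exclusion), though the paper goes one step further: after reducing to an affine scheme $\Spec \ZZ[x_1,\ldots,x_m]/(F_1,\ldots,F_k)$, it uses Wan's inclusion--exclusion over subsets of the $F_i$ to reduce to a \emph{single} affine hypersurface $\{F=0\}$, and then slices that hypersurface into toric pieces by setting subsets of the variables to zero. So the target of the reduction is not a one-parameter family but a hypersurface in a torus $\TT^n$ of arbitrary dimension~$n$ (fixed, depending only on $X$).

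The real gap is in your core computational step. Your proposal hinges on a ``dimension-reduction'' to $n=1$ via fibration or elimination, and you yourself flag this as the obstacle. It is a genuine one: fibering an $n$-dimensional affine scheme over $\FF_p$ over a line gives $(n-1)$-dimensional fibres, and counting points on each fibre recursively just reproduces the $p^n$ bound. There is no known elimination argument, uniform in $p$, that collapses an arbitrary affine $\ZZ$-scheme to a one-parameter problem whose $\FF_p$-points can be counted in time $p^{1+\eps}$. The paper does \emph{not} reduce the dimension at all.

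Instead, the paper works directly with a hypersurface $\{\red F=0\}$ in $\TT^n_{\FF_q}$ and proves a $p$-adic trace formula (Theorem~3.1): modulo $p^\lambda$, the point count $|X(\FF_{q^r})|$ is a fixed linear combination of traces $\trace(A_{F^s}^{ar})$, where $A_{F^s}$ acts on a space of dimension $\binom{n+ds}{n}$ independent of $p$, and its matrix $M_s$ has entries that are specific coefficients of $F^{(p-1)s}$. The second key ingredient (Theorem~4.1) is a \emph{deformation recurrence}: one deforms the diagonal polynomial $G = x_0^d + \cdots + x_n^d$ to $F$ along $G^{ks-\ell}F^\ell$, and the relevant coefficient vector satisfies a linear recurrence in $\ell$ of length $(p-1)s = O(p)$ whose transition matrices $Q(k_0,\ell)$ have size depending only on $n,d,s$. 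Evaluating this recurrence naively gives part~(a); applying Bostan--Gaudry--Schost to it gives part~(b); running the accumulating remainder tree on it (with $p$ replaced by a formal variable $k$, specialised at the end) gives part~(c).

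So your intuition that (b) and (c) come from BSGS and the remainder tree applied to a length-$p$ matrix recurrence is correct, but the recurrence is indexed by the deformation parameter $\ell$, not by points $x\in\FF_p$, and it exists in every dimension without any fibration.
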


By ``time complexity'' we mean the number of bit operations, or more precisely, the number of steps on a multi-tape Turing machine, in the sense of~\cite{Pap-complexity}.
The notation $z^{\eps}$ means a function bounded by $C z^{g(z)}$, where $C > 0$ is an absolute constant and where $g(z)$ is a non-negative function such that $\lim_{z\to\infty} g(z) = 0$.
By ``space complexity'' we mean the number of distinct tape cells visited during a computation.
This includes the space occupied by the output, but not the input, which may be assumed read-only.
The space complexity of an algorithm never exceeds its time complexity.
In Theorem~\ref{thm:scheme}, the implied constants depend of course on~$X$.

The complexity bounds in Theorem~\ref{thm:scheme} improve substantially on previously known algorithms.
We discuss this in more detail below, but for the moment we point out that for a variety in affine $n$-space, the previous best result was that of Lauder and Wan~\cite{LW-counting}, who obtained the bound $p^{O(n)}$ for computing a single $Z_{X_p}(T)$.
Theorem~\ref{thm:scheme}(a) reduces the exponent of~$p$ in the time complexity from $O(n)$ to $1 + \eps$, regardless of the dimension, and the space complexity is the minimum conceivable.
Part (b) reduces the exponent of~$p$ further to $1/2 + \eps$, but gives up the gains in space.
Part (c) reduces the complexity from exponential to \emph{polynomial} in $\log p$, provided we average over $p < N$.
The average complexity per prime is $\log^{4+\eps} N$, where again the exponent $4 + \eps$ is independent of the dimension.
Even for the simplest nontrivial case of an elliptic curve, this bound is competitive with the best known variants of Schoof's algorithm, both deterministic and probabilistic (see~\cite{Har-avgpoly} for further discussion).

The algorithms introduced in this paper are ``elementary'' in the sense that they do not rely on any cohomology theory, either $p$-adic or $\ell$-adic.
We have not yet implemented them on a computer, and we do not know if they are practical.
However, we remark that in recent joint work with Sutherland~\cite{HS-hassewitt,HS-hassewitt2}, an algorithm similar to that of (c) has been implemented for the special case of a hyperelliptic curve of genus $2$ or $3$; for $N$ around $2^{30}$ it outperforms previous methods by a factor of more than 300.

The bulk of the paper is devoted to the special case of a hypersurface in an affine torus.
Our results for this case are more concrete and more precise than those for a general arithmetic scheme.
In the remainder of this section, we state the hypersurface results, discuss previous algorithms in the literature, and show how to deduce the general case from the hypersurface results.

\subsection{Hypersurfaces over finite fields}

Let~$p$ be a prime, let~$a$ be a positive integer, and let $\FF_q$ be the finite field with $q = p^a$ elements.
For $n \geq 1$ let $\PP^n_{\FF_q}$ denote projective $n$-space over $\FF_q$ with homogeneous coordinates $x_0, \ldots, x_n$, and let $\TT^n_{\FF_q}$ be the affine torus $\{x_0 \cdots x_n \neq 0\} \subset \PP^n_{\FF_q}$.
Let $\red F \in \FF_q[x_0, \ldots, x_n]$ be homogeneous of degree $d \geq 1$, and let~$X$ be the hypersurface in $\TT^n_{\FF_q}$ cut out by~$\red F$.
We allow the case $\red F = 0$, in which case $X = \TT^n_{\FF_q}$.
The zeta function of~$X$ is $\zeta_X(s) = Z_X(q^{-s})$ where
 \[ Z_X(T) = \exp \left(\sum_{r \geq 1} \frac{|X(\FF_{q^r})|}r T^r \right) \in 1 + T\ZZ[[T]]. \]
Very explicitly, $|X(\FF_{q^r})|$ is given by
 \[ |X(\FF_{q^r})| = \frac{1}{q^r - 1} \big|\big\{(c_0, \ldots, c_n) \in (\FF_{q^r}^*)^{n+1} : \red F(c_0, \ldots, c_n) = 0 \big\}\big|. \]
For such~$X$, we have the following more precise versions of Theorem~\ref{thm:scheme}(a) and (b).
\begin{thm}
\label{thm:hs-linear}
There exists an explicit deterministic algorithm with the following properties.
The input consists of positive integers~$a$, $n$, $d$, a prime~$p$ not dividing~$d$, a monic irreducible polynomial $\red f \in \FF_p[t]$ of degree~$a$ defining the finite field $\FF_q \cong \FF_p[t]/\red f$, and a homogeneous polynomial $\red F \in \FF_q[x_0, \ldots, x_n]$ of degree~$d$, defining a hypersurface~$X$ in $\TT^n_{\FF_q}$ as above.
The output is $Z_X(T)$.
The algorithm has time complexity
 \[ 2^{6n^2 + 13n} n^{3n+4+\eps} (d+1)^{3n^2+6n+\eps} a^{3n+4+\eps} p \log^{1+\eps} p \]
and space complexity
 \[ O(2^{4n^2+9n} n^{2n+2} (d+1)^{2n^2+4n} a^{2n+3} \log p). \]
\end{thm}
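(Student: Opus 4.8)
The plan is to reduce the computation of $Z_X(T)$ to finitely many point counts over extensions of $\FF_q$, each to be computed modulo a fixed power of~$p$, and to obtain those counts by unrolling (or, for part~(b), by baby-step/giant-step evaluation of) a linear recurrence of bounded order. I will use the standard facts, due to Dwork and to Adolphson--Sporer (with explicit Betti-number bounds from Denef--Loeser or Katz), that $Z_X(T) \in \QQ(T)$, that its numerator and denominator are coprime with constant term~$1$ and total degree at most an explicit $B = B(n,d)$, and that the reciprocal roots have absolute value $q^{w/2}$ with $0 \le w \le 2(n-1)$. It follows that $Z_X$ is determined by $N_r := |X(\FF_{q^r})|$ for $r \le 2B$, via polynomial Pad\'e approximation applied to $\exp(\sum_r N_r T^r/r)$; and since $0 \le N_r \le q^{rn}$, each $N_r$ is recovered exactly from its residue modulo $p^m$ for a single $m = O_{n,d,a}(1)$. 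The case $\red F = 0$ is the known zeta function of the torus, so I assume $\red F \ne 0$. The whole problem is thus reduced to computing $N_r \bmod p^m$ for $r \le 2B$ within the stated time and space.

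Next I would set up an elementary form of the Dwork trace formula. Writing $N_r$ in terms of the toric exponential sums $S_r(\lambda) = \sum_{c \in (\FF_{q^r}^*)^{n+1}} \psi_r(\lambda \red F(c))$ and using the hypothesis $p \nmid d$ together with the homogeneity of $\red F$ to normalise the parameter $\lambda$, one expresses $S_r(\lambda)$, to $\pi$-adic precision $p^m$, as the trace of the $r$-th power of a fixed $\ZZ_p[\pi]$-linear endomorphism $\Phi_q$ of a free module with basis indexed by the interior lattice points of a bounded dilate of the relevant Newton polytope; this index set has cardinality bounded in terms of $n$ and $d$ alone, and $\Phi_q$ is an $a$-fold twisted composite of the $p$-power Frobenius operator $\Phi$. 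The entries of $\Phi$ are explicit finite $\ZZ_p$-linear combinations of the coefficients, at the monomials in that bounded index set, of the power series $\hat F^k$ for $1 \le k \le N$, where $\hat F$ is the Artin--Hasse--Dwork modification of a Teichm\"uller lift of $\red F$ and $N = O_{n,d,a}(p)$ is the truncation degree forced by the precision $p^m$. Assembling $\Phi$, forming $\Phi_q$, reading off $N_r \bmod p^m$ from its traces, and running the Pad\'e reconstruction of the previous paragraph costs only $O_{n,d,a}(\log^{1+\eps} p)$ bit operations in $O_{n,d,a}(\log p)$ space.

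The only substantial cost is the computation of the $O_{n,d,a}(p)$ relevant coefficients of $\hat F, \hat F^2, \ldots, \hat F^N$. For this I would show that the vector $v_k$ of these coefficients satisfies a recurrence $\ell(k) v_{k+1} = M(k) v_k$ in which $M(k)$ and the scalar $\ell(k)$ are polynomials in $k$ of bounded degree with $p$-integral coefficients, and $M(k)$, $v_k$ have size bounded in terms of $n$ and $d$; such a recurrence of bounded order exists because the family $\{\hat F^k\}$ is $A$-hypergeometric (equivalently, by creative telescoping applied to $\hat F^k$), with order and degrees explicit in $n$ and $d$. Unrolling this recurrence for $k = 1, \ldots, N$ uses $O(N) = O_{n,d,a}(p)$ arithmetic operations modulo $p^{m'}$, i.e.\ $p \log^{1+\eps} p$ bit operations, and keeps only $O(1)$ vectors in memory, i.e.\ $O(\log p)$ space; the linear functionals producing the entries of $\Phi$ can be accumulated on the fly. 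This gives~(a). For~(b) I would instead evaluate the matrix product $M(N-1) \cdots M(1)$, augmented by an accumulator row for the running linear combinations, by the Bostan--Gaudry--Schost algorithm for the $N$-th term of a P-recursive sequence: $O(\sqrt N) = O(p^{1/2})$ ring operations, with a logarithmic factor from fast multipoint evaluation, storing $O(\sqrt N) = O(p^{1/2})$ ring elements. This gives the $p^{1/2} \log^{2+\eps} p$ time and $O(p^{1/2} \log p)$ space of~(b).

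I expect the main obstacle to be making the reduction of the second paragraph completely explicit and entirely elementary, with all bounds polynomial in $a$ and with the powers of $p$ in denominators kept under control. In particular one must: (i) fix $N$ and $m$ from a clean lower bound on the $\pi$-adic valuation of the degree-$k$ coefficient of the Dwork splitting function --- this is where the Artin--Hasse normalisation is essential, so that $\hat F$ is $p$-integral and truncation at $N = O_{n,d,a}(p)$ genuinely suffices; (ii) produce the bounded-order recurrence with $\ell(k)$ a \emph{nonzero} polynomial whose reductions modulo $p$ vanish for only $O_{n,d}(1)$ residues of $k$ per block of length $p$, and dispatch those finitely many degenerations by a bounded local boost of precision; and (iii) bound the size of the interior-lattice-point index set and the order and coefficient-degrees in~(ii) in terms of $n$ and $d$, and the exponent of $p$ needed for the twisted composite $\Phi_q$ in terms of $a$. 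The bookkeeping in (i)--(iii) is what produces the explicit exponents $6n^2 + 13n$, $3n^2 + 6n$, $3n+4$, and so on, in the complexity bounds, but once the structure above is in place it is routine.
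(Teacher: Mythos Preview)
Your overall architecture---reduce to finitely many point counts modulo $p^\lambda$, express these via a $p$-adic trace formula whose matrix entries are certain coefficients of high powers of a lift of $\red F$, then obtain those coefficients by unrolling (or BGS-evaluating) a linear recurrence of length $O(p)$---matches the paper's. But two of your three building blocks differ substantially from what the paper actually does, and one of them contains a genuine gap.

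First, the paper does \emph{not} use Dwork's splitting function. Its trace formula (Theorem~\ref{thm:trace}) is new and entirely elementary: one simply observes that with $H = (F \cdot \phi(F) \cdots \phi^{ar-1}(F))^{p-1}$, the polynomial $(1-H)^\lambda \sum_{t<\tau} (-1)^t\binom{-\lambda}{t} H^t$ is an indicator function modulo $p^\lambda$ for the vanishing of $\red F$. No additive characters, no $\pi$, no Artin--Hasse function. This makes your second paragraph's precision bookkeeping (your obstacle (i)) disappear: the relevant matrix entries are just coefficients $(F^{(p-1)s})_{pv-u}$ of ordinary integer powers of an arbitrary lift $F \in \ZZ_q[x]_d$.

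Second, and more seriously, your recurrence is not established. You assert that the needed coefficients satisfy $\ell(k)v_{k+1}=M(k)v_k$ with bounded size and $p$-integral entries ``because the family $\{\hat F^k\}$ is $A$-hypergeometric (equivalently, by creative telescoping)''. Neither of these justifications yields what you need. A GKZ-type contiguity relation or a creative-telescoping certificate will, for a general $F$, produce a leading coefficient $\ell(k)$ involving a resultant or discriminant of $F$; for singular $F$ this vanishes identically, and even for smooth $F$ it introduces uncontrolled $p$-adic denominators. Your obstacle (ii) is not ``bookkeeping''---it is the heart of the matter, and your proposal does not resolve it. The paper's solution (Theorem~\ref{thm:deform}) is a specific \emph{deformation} recurrence: rather than incrementing the exponent of $F$ directly, one fixes $k_0 = p-1$ and passes through the family $G^{k_0 s-\ell}F^{\ell}$ for $\ell=0,\ldots,k_0 s$, where $G = x_0^d + \cdots + x_n^d$. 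Differentiating with respect to the $G$-factor (which is diagonal) gives a first-order matrix recurrence in $\ell$ whose \emph{only} denominator is $d \cdot (k_0 s - \ell)$; over the full product this is $d^{(p-1)s}((p-1)s)!$, whose $p$-adic valuation is at most $s$. The initial vector consists of multinomial coefficients (coefficients of $G^{(p-1)s}$), themselves computable as products of factorials. This is what makes the algorithm work for \emph{arbitrary} $F$ with no smoothness hypothesis, and it is not something one extracts from generic holonomicity arguments.

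In short: your plan has the right shape, but the two ingredients you treat as routine---the trace formula and the recurrence---are exactly where the paper's new ideas lie, and your sketch of the recurrence would, as written, fail for singular $\red F$.
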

\begin{thm}
\label{thm:hs-sqrt}
There exists an explicit deterministic algorithm with the following properties.
The input and output is the same as in Theorem~\ref{thm:hs-linear}.
The algorithm has time complexity
 \[ 2^{8n^2 + 16n} n^{4n+4+\eps} (d+1)^{4n^2+7n+\eps} a^{4n+4+\eps} p^{1/2} \log^{2+\eps} p \]
and space complexity
 \[ O(2^{4n^2+9n} n^{2n+2} (d+1)^{2n^2+4n} a^{2n+3} p^{1/2} \log p). \]
\end{thm}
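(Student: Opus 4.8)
The plan is to build the $p^{1/2+\eps}$ algorithm of Theorem~\ref{thm:hs-sqrt} on top of the linear-time algorithm of Theorem~\ref{thm:hs-linear} by means of a baby-step/giant-step argument. The essential observation is that, in the linear-time algorithm, the quantities $|X(\FF_{q^r})|$ for $r = 1, \ldots, 2n$ (which determine $Z_X(T)$, since its numerator and denominator have degree $O_n(d)$ independent of $p$) are ultimately extracted from a single matrix product of length roughly $p$ over $\FF_q$, or more precisely from the action of a long composition of linear maps $M_0 M_1 \cdots M_{p-1}$ on a vector space of dimension $D = D(n,d,a)$ coming from a suitable reduction or ``telescoping'' of a character sum. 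I would first isolate this structure precisely: define the relevant $D \times D$ transition matrices $M_k$ with entries in $\FF_q$ (or in a small ring $\ZZ/p^c\ZZ$-algebra, if a $p$-adic lift is needed for the exponent count), verify that $D$ is polynomial in $n$, $d$, $a$ with exponents matching those in the statement, and confirm that each individual $M_k$ can be written down in time $\log^{1+\eps} p$.

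Next I would apply the standard accumulating-remainder-tree or Bostan--Gaudry--Schost style product-of-matrices trick: split the range $[0,p)$ into $\approx p^{1/2}$ blocks of length $\approx p^{1/2}$, compute each block product $B_j = \prod_{k \in \text{block } j} M_k$ in time $p^{1/2} \log^{1+\eps} p$ (so $p^{1/2}$ blocks cost $p \log^{1+\eps} p$ naively, but here the key is that the entries of the $M_k$ depend polynomially on $k$, so the sequence $B_0, B_1, \ldots$ can be obtained by a multipoint evaluation of a matrix of polynomials of degree $O(D)$ at $p^{1/2}$ points). This reduces the cost of assembling all block products to $\widetilde{O}(D^\omega p^{1/2})$ operations in $\FF_q$, and then the final giant-step pass multiplies the $p^{1/2}$ block matrices together in another $\widetilde{O}(D^\omega p^{1/2})$ operations. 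Converting operations in $\FF_q$ to bit operations contributes the factor $a^{1+\eps}\log^{1+\eps} p$, and tracking all the $D$-dependent factors through the fast polynomial-evaluation and matrix-multiplication subroutines yields the stated bounds, with the space now $\widetilde{O}(D^2 p^{1/2} a \log p)$ because one must store the $p^{1/2}$ intermediate block matrices.

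I expect the main obstacle to be making the ``polynomial dependence on $k$'' claim genuinely precise and uniform. In the linear-time algorithm the transition maps presumably arise from some recurrence on truncated power-series coefficients or from Frobenius-type steps in a reduction process, and it is not a priori obvious that the $k$-th map is the evaluation at $t = k$ of a fixed matrix of polynomials of controlled degree with coefficients that are themselves cheaply computable; there may be denominators divisible by small primes, or the recurrence may only be polynomial in $k$ after clearing a factor like $k!$ or $\binom{2k}{k}$, which then has to be handled by carrying an auxiliary scalar or by working modulo a power of $p$. Once that is nailed down, the remaining work — choosing the block size to balance the two passes, invoking a fast multipoint-evaluation routine for matrices of polynomials, and bounding the bit complexity of arithmetic in $\FF_q = \FF_p[t]/\red f$ — is routine, and the exponents $8n^2+16n$, $4n^2+7n$, $4n+4$ in the time bound and $4n^2+9n$, $2n^2+4n$, $2n+3$ in the space bound should fall out of substituting the value of $D$ from Theorem~\ref{thm:hs-linear} into these generic estimates, roughly squaring the $D$-contribution (because of the matrix multiplications) while replacing the factor $p$ by $p^{1/2}$.
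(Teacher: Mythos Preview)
Your approach is essentially the one the paper takes: the proof of Theorem~\ref{thm:hs-sqrt} is literally one line, replacing the naive evaluation of the matrix products in Proposition~\ref{prop:Ms-linear} by the Bostan--Gaudry--Schost algorithm (Lemma~\ref{lem:BGS}, Proposition~\ref{prop:Ms-sqrt}). Your identification of the bottleneck as a length-$O(p)$ product of matrices whose entries depend linearly on the index, and your anticipation that the main technical work is making this polynomial dependence explicit, are both correct --- in the paper this is exactly the deformation recurrence (Theorem~\ref{thm:deform}), which produces matrices $Q(k_0,\ell)$ with entries linear in $\ell$.

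Two factual corrections, however. First, you need $|X(\FF_{q^r})|$ for $r$ up to $2(4d+4)^n$, not $r \leq 2n$: the numerator and denominator of $Z_X(T)$ have degree bounded by $(4d+4)^n$ (Bombieri's bound, used in Lemma~\ref{lem:bd-zeta1}), which is exponential in $n$, not $O_n(d)$. Second, the matrix products are not over $\FF_q$ but over $\ZZ_q/p^\lambda\ZZ_q$ with $\lambda = 2na(4d+4)^n$, since the trace formula (Theorem~\ref{thm:trace}) only determines $|X(\FF_{q^r})|$ modulo $p^\lambda$ and one must take $\lambda$ large enough to pin down these point counts exactly. This $p$-adic precision, together with the large range of $r$ and of the auxiliary parameter $s$, is what drives the exponents of $a$, $n$ and $(d+1)$ in the stated bound; your heuristic of ``substituting the value of $D$ from Theorem~\ref{thm:hs-linear}'' would not reproduce them without tracking $\lambda$ carefully. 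These are bookkeeping errors rather than structural ones, and once corrected your plan goes through.
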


Each element of $\FF_q$ occupies $O(a \log p)$ bits, so the total size of the input in Theorems~\ref{thm:hs-linear} and~\ref{thm:hs-sqrt} is $O((d+1)^n a \log p)$.
The requirement that $p \ndivides d$ is not a serious restriction: if $p \divides d$ we may simply replace~$\red F$ by $x_0 \red F$, increasing the degree by one, without changing $Z_X(T)$. 

Let us compare these complexity bounds to known algorithms in the literature.
We will mainly emphasise the dependence of the complexity on~$p$, as this is where the new algorithms have a decisive advantage.

First consider the naive algorithm: brute force enumeration of points.
To compute $|X(\FF_{q^r})|$ for a given~$r$, this requires roughly~$q^{nr}$ function evaluations.
To obtain the whole zeta function we must compute $|X(\FF_{q^r})|$ for~$r$ up to $2(4d+4)^n$ (see Lemma~\ref{lem:bd-zeta1}).
This leads to an overall complexity bound somewhat worse than~$q^{d^n}$, which is exponential in $\log p$, $a$, and~$d^n$.
On the positive side, the naive algorithm is very economical with respect to space!

The best result for an arbitrary hypersurface was previously that of Lauder and Wan~\cite{LW-counting}, who proved that $Z_X(T)$ may be computed in time $(d^n a p)^{O(n)}$.
In particular, for fixed~$p$ and~$n$, their bound is polynomial in the input size.

Recently, Sperber and Voight~\cite{SV-fewnomial} used a related method to improve the dependence on~$p$ in the case that the defining polynomial is \emph{sparse} and \emph{nondegenerate}.
The sparsity affects the exponent of~$p$ in the complexity bound: if~$\red F$ has sufficiently few monomials it is only~$p^{1+\eps}$, but it rises to $p^{2n+\eps}$ in the dense case.
The nondegeneracy condition is a slightly stronger condition than smoothness.
Voight has also pointed out (personal communication) that in the maximally sparse case their method may be modified to obtain results of the same strength as parts (b) and (c) of Theorem~\ref{thm:scheme}.

Both of the abovementioned papers actually give more precise results taking into account the Newton polytope of the defining polynomial.
Roughly speaking, one replaces~$d^n$ in the complexity bounds by the volume of the polytope.
It is possible to modify our algorithms to take into account the Newton polytope, but for simplicity in this paper we only discuss the ``simplex'' case.

For a \emph{smooth} projective hypersurface of degree~$d$, and assuming that $p \neq 2$ and $p \ndivides d$, Lauder obtained the bound $(d^n a p)^{O(1)}$ using the ``deformation method''~\cite{Lau-manyvar}.
The dependence on~$p$ was originally $p^{2+\eps}$, but Lauder conjectured in~\cite{Lau-recursive} that this may be reduced to~$p^{1+\eps}$, and this is consistent with observations of subsequent authors~\cite{Ger-relative,PT-deformation}.
The deformation method has the advantage that for fixed~$p$ the complexity is polynomial in the input size, even for varying dimension.
This is not the case for Theorems~\ref{thm:hs-linear} and~\ref{thm:hs-sqrt}, or for the Lauder--Wan algorithm.

Another approach to the smooth projective case was proposed by Abbott, Kedlaya and Roe~\cite{AKR-picard}.
They did not analyse the complexity of their algorithm; the dependence on~$p$ appears to be $p^{n+\eps}$.
Around 2010, the present author developed variants of the Abbott--Kedlaya--Roe algorithm with complexity~$p^{1+\eps}$ and~$p^{1/2+\eps}$.
These results remain unpublished, although some of the key ideas appear in this paper in modified form.
Ongoing implementation work by Edgar Costa suggests that these variants are feasible in practice.

We emphasise that Theorems~\ref{thm:hs-linear} and~\ref{thm:hs-sqrt} impose no smoothness or sparsity conditions on~$X$.
As far as we are aware, the complexity bound in Theorem~\ref{thm:hs-sqrt} is the best known, as a function of~$p$, for computing the zeta function of an arbitrary (or even smooth) hypersurface over a finite field.
While Theorem~\ref{thm:hs-linear} offers a less favourable time bound, it still matches all previously published algorithms, and has the advantage that the space complexity is only $O(\log p)$.
Apart from the naive algorithm, we do not know of any previous algorithm with space complexity less than exponential in $\log p$.

For the special case of \emph{curves}, the point-counting literature is much richer.
We mention only a few relevant results.

For a curve~$X$ of genus~$g$ over~$\FF_q$, the descendants of Schoof's $\ell$-adic method~\cite{Sch-elliptic,Pil-abelian,AH-counting} compute $Z_X(T)$ in time $(\log q)^{C_g}$ where in general~$C_g$ depends exponentially on~$g$.
If~$X$ is hyperelliptic,~$C_g$ can be taken to be polynomial in~$g$. 
Thus for fixed~$g$, these algorithms have complexity polynomial in $\log p$ and~$a$, but in general the complexity is badly exponential in~$g$.

On the other hand, the various $p$-adic algorithms have complexity polynomial in~$g$ and~$a$, but exponential in $\log p$.
For example, Kedlaya's algorithm for hyperelliptic curves~\cite{Ked-hyperelliptic} has complexity $(g^4 a^3 p)^{1+\eps}$.
Kedlaya's method is very flexible, and the algorithm was subsequently generalised to larger classes of curves.
Until very recently, the most general version available applied to any nondegenerate curve~\cite{CDV-nondegenerate}.
The dependence on~$p$ was not analysed in~\cite{CDV-nondegenerate}, but one expects it to be~$p^{1+\eps}$.
Unfortunately, even in genus~$5$, most curves do not have a nondegenerate model~\cite{CV-nondegeneracy}.
The recent results of Tuitman extend Kedlaya's method to even more general curves~\cite{Tui-general,Tui-P1}, but it is unclear exactly what class of curves is covered.

A complexity bound of the form $(ga)^{O(1)} p^{1/2+\eps}$ first appeared in~\cite{BGS-recurrences}, for hyperelliptic curves, but there the zeta function was determined only modulo~$p$.
The present author subsequently obtained the same bound for the full zeta function~\cite{Har-kedlaya}.
It seemed for some time that the construction of the latter algorithm depended very strongly on the shape of the equation defining the curve, and it was unclear whether it could be generalised much further.
The only positive result in this direction was Minzlaff's result for superelliptic curves~\cite{Min-superelliptic}, which is algebraically very similar to the hyperelliptic case.
Theorem~\ref{thm:hs-sqrt} of this paper shows that in fact the techniques used to obtain the $p^{1/2+\eps}$ bound apply far more generally than previously supposed.

Using Theorem~\ref{thm:hs-linear}, we may sketch a zeta function algorithm for \emph{arbitrary} curves with time complexity $(ga)^{O(1)} p \log^{1+\eps} p$ and space complexity $O((ga)^{O(1)}\log p)$.
The idea is to count points on a \emph{not necessarily smooth} plane model of~$X$.
This gives the right result except possibly at the singularities, and at the points outside the torus.
One then explicitly determines the (finitely many) exceptional points and makes appropriate corrections.
Alternatively, as suggested by the referee, one may take advantage of the Weil Conjectures, and simply remove the factors from the numerator of the zeta function whose roots have the wrong absolute value.
Using~Theorem \ref{thm:hs-sqrt} instead, one expects to obtain time complexity $(ga)^{O(1)} p^{1/2} \log^{2+\eps} p$.
We have not checked the details, or determined the precise exponents of~$a$ and~$g$.

Our strategy for proving Theorems~\ref{thm:hs-linear} and~\ref{thm:hs-sqrt} depends on two key ideas: a new \emph{trace formula} (Theorem~\ref{thm:trace}), and a \emph{deformation recurrence} (Theorem~\ref{thm:deform}), developed in Sections~\ref{sec:trace} and~\ref{sec:recurrences} respectively.

The trace formula expresses $Z_X(T)$ in terms of certain coefficients of powers of an arbitrary $p$-adic lift~$F$ of~$\red F$.
Our framework for developing this formula is strongly influenced by~\cite{LW-counting}.
The algorithms of~\cite{LW-counting} use a different trace formula due to Dwork, evaluating a certain character sum that has been lifted $p$-adically via Dwork's ``splitting function'', whereas our trace formula does not involve any splitting function.

Evaluating the trace formula in the most naive way, by simply expanding out the relevant powers of~$F$ and reading off the appropriate coefficients, leads to a zeta function algorithm whose dependence on~$p$ is $p^{n+\eps}$ (Theorem~\ref{thm:hs-naive}).
This complexity bound is comparable to the main result of~\cite{LW-counting}, but it is achieved by an algorithm that is arguably simpler.

To improve the dependence on~$p$, we must show how to extract coefficients of powers of~$F$ more efficiently than the naive algorithm.
For this, we observe that it is easy to compute coefficients of powers of the auxiliary polynomial $G = x_0^d + \cdots + x_n^d$, since they are just multinomial coefficients.
Theorem~\ref{thm:deform} then gives a recurrence that ``deforms'' powers of~$G$ into powers of~$F$.
This is reminiscent of Lauder's deformation method, but we do not know a precise relationship.
Evaluating the deformation recurrence in the most straightforward way immediately yields a proof of Theorem~\ref{thm:hs-linear}.
Evaluating it instead with an algorithm of Bostan, Gaudry and Schost~\cite{BGS-recurrences} leads to Theorem~\ref{thm:hs-sqrt}.

While Theorem~\ref{thm:deform} looks quite innocuous, and its proof is very simple, the recurrence has the crucial property that it imposes no smoothness hypotheses on~$F$.
From an algebraic point of view, it avoids the denominators that typically appear in zeta function algorithms based on $p$-adic cohomology, i.e., arising from divisions by ``resultants'' or ``discriminants''.
Ultimately, this is why the new algorithms are applicable to a completely general hypersurface, and hence any variety whatsoever.

\subsection{Hypersurfaces over $\ZZ$}
Let $\PP^n_\ZZ = \Proj \ZZ[x_0, \ldots, x_n]$ denote projective $n$-space over~$\ZZ$, and let~$\TT^n_\ZZ$ be the open subscheme obtained as the complement of the zero locus of $x_0 \cdots x_n$.
Let $F \in \ZZ[x_0, \ldots, x_n]$ be nonzero and homogeneous of degree $d \geq 1$, and let~$X$ be the closed subscheme of~$\TT^n_\ZZ$ defined by~$F$.
For any prime~$p$, let~$\red F_p$ be the image of~$F$ in $\FF_p[x_0, \ldots, x_n]$, and let~$X_p$ be the hypersurface in $\TT^n_{\FF_p}$ defined by~$\red F_p$.
Note that if~$p$ divides all the coefficients of~$F$, then $\red F_p = 0$ and $X_p = \TT^n_{\FF_p}$.

Let $N \geq 2$, and consider the problem of computing $Z_{X_p}(T)$ for all $p < N$.
Using the Lauder--Wan algorithm for each prime separately leads to the complexity bound $N^{O(n)}$.
Theorem~\ref{thm:hs-linear} improves this to $N^{2+\eps}$, and Theorem~\ref{thm:hs-sqrt} reduces it further to $N^{3/2+\eps}$.
The next result achieves the bound~$N^{1+\eps}$, which is optimal up to logarithmic factors, by treating all primes simultaneously.
This generalises the author's result for hyperelliptic curves~\cite{Har-avgpoly}.
We denote by~$\norm F$ the maximum of the absolute values of the coefficients of~$F$.
\begin{thm}
\label{thm:hs-avgpoly}
There exists an explicit deterministic algorithm with the following properties.
The input consists of positive integers~$N$, $n$, $d$, and a homogeneous polynomial $F \in \ZZ[x_0, \ldots, x_n]$ of degree~$d$, defining a hypersurface~$X$ in~$\TT^n_\ZZ$ as above.
The output is the sequence of zeta functions $Z_{X_p}(T)$ for all primes $p < N$, $p \ndivides d$.
The algorithm has time complexity
 \[ 2^{8n^2 + 16n} n^{4n+6+\eps} (d+1)^{4n^2+7n+\eps} N \log^2 N \log^{1+\eps}(N \norm F) \]
and space complexity
 \[ O(2^{4n^2+11n} n^{2n+4} (d+1)^{2n^2+5n} N \log N \log(ndN \norm F)). \]
\end{thm}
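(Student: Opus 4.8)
The plan is to process all primes $p < N$ in a single pass, by building an \emph{average polynomial time} analogue of the algorithm underlying Theorem~\ref{thm:hs-linear} around the accumulating remainder tree of~\cite{Har-avgpoly}. The starting observation is that every reduction $\red F_p$ is the image of the \emph{same} lift, namely the given polynomial $F \in \ZZ[x_0,\ldots,x_n]$ viewed $p$-adically. Hence, for each prime $p \ndivides d$, the trace formula of Theorem~\ref{thm:trace} expresses $Z_{X_p}(T)$ in terms of finitely many coefficients of powers of $F$ of exponents $O_{n,d}(p)$, reduced modulo $p^\mu$, where $p^\mu$ is taken to be the least power of $p$ exceeding a fixed bound $B(n,d)$ --- twice the Weil bound for the integer coefficients of the numerator and denominator of $Z_{X_p}(T)$, so that those coefficients can be read off from their residues. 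Note $p^\mu = O_{n,d}(p)$. The whole problem thus reduces to computing these coefficients modulo $p^\mu$, for every $p < N$ at once.

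To do this I would feed the relevant powers of $F$ into the deformation recurrence of Theorem~\ref{thm:deform}. Its initial data are coefficients of powers of $G = x_0^d + \cdots + x_n^d$, which are explicit multinomial coefficients; and its $k$-th step, after clearing denominators, takes the form $v_{k+1} = D_k^{-1} A_k v_k$, where $A_k \in M_r(\ZZ)$ and $D_k \in \ZZ$ are the values at the integer $k$ of a fixed $r\times r$ matrix of polynomials and a fixed polynomial, of bounded degree, depending only on $F, n, d$; the dimension $r$ depends only on $n$ and $d$. Therefore $\big(\prod_{k<K} D_k\big)\, v_K = A_{K-1}\cdots A_0\, v_0$. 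The essential structural point to check is that this recurrence is ``averageable'': there is one sequence $A_0, A_1, \ldots$ (and $D_0, D_1, \ldots$) over $\ZZ$, depending only on $F, n, d$, such that for every prime $p \ndivides d$ the data demanded by the trace formula are obtained by truncating the recurrence at a step $K = K(p) = O_{n,d}(p)$ and reducing modulo $p^\mu$ --- so the dependence on $p$ is confined to the stopping index and the modulus. For $k$ up to $\mathcal K := \max_{p<N} K(p) = O_{n,d}(N)$ the entries of $A_k$ and $D_k$ have bit size $O_{n,d}(\log(N\norm F))$.

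Granting this, I would sieve the primes $p < N$ and run the accumulating remainder tree of~\cite{Har-avgpoly} (for products of matrices) on $A_0, \ldots, A_{\mathcal K - 1}$ with the moduli $p^{\mu'}$, obtaining $A_{K(p)-1}\cdots A_0 \bmod p^{\mu'}$ and $\prod_{k<K(p)} D_k \bmod p^{\mu'}$ for all $p$ simultaneously; its space-efficient variant (the ``remainder forest'') keeps the space within the stated bound. Dividing by the scalar then recovers $v_{K(p)} \bmod p^\mu$. Here the working precision $\mu'$ is taken a constant above $\mu$: the $D_k$ are products of linear forms in $k$ with $k$ possibly exceeding $p$, so $\prod_{k<K(p)} D_k$ may be divisible by $p$, but only to order $O_{n,d}(1)$, while the hypothesis $p \ndivides d$ handles any denominator that is a power of $d$. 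Finally, from $v_{K(p)} \bmod p^\mu$ one reconstructs $Z_{X_p}(T)$ exactly as in the single-prime algorithm of Theorem~\ref{thm:hs-linear}, at cost $(\log p)^{O_{n,d}(1)}$ per prime, which summed over $p < N$ is absorbed into the main term. The claimed time and space bounds then follow by carrying $r$, the bit sizes, and $\mathcal K$ through the standard cost estimate for the remainder tree, as for hyperelliptic curves in~\cite{Har-avgpoly}.

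I expect the main obstacle to be the verification of ``averageability'': one must exhibit Theorem~\ref{thm:deform} as a single $\ZZ$-linear recurrence of length $O_{n,d}(N)$ whose truncations specialise correctly modulo $p^\mu$ for all $p < N$ simultaneously --- with no auxiliary $p$-dependent parameter hidden in the transition matrices --- and one must bound, uniformly in $p$, the power of $p$ dividing the accumulated scalar denominator. Once that is settled, the remainder-tree evaluation and the space bookkeeping via the remainder forest are routine adaptations of the hyperelliptic case.
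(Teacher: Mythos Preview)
Your overall plan---trace formula plus deformation recurrence plus accumulating remainder tree---is the right architecture, and you have correctly located the crux. But the obstacle you flag is real and is \emph{not} resolved by the approach you sketch.

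Look again at Theorem~\ref{thm:deform} and the application in~\eqref{eq:Qprod}. The recurrence index is $\ell$, running from $0$ to $k_0 s - 1$, while $k_0 = p-1$ is a separate parameter. By~\eqref{eq:defn-Q} the entries of $Q(k_0,\ell)$ are linear in \emph{both} $k_0$ and $\ell$: the $\ell$-th transition matrix literally contains $p$. So there is no single sequence $A_0, A_1, \ldots$ of integer matrices, depending only on $F,n,d$, whose partial products specialise correctly for every prime. The ``averageability'' you hope to verify simply fails; the paper says so explicitly in Section~1.2 (``we have been unable to make the recurrence matrices independent of $p$'').

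The paper's workaround is the \emph{generic prime} device: treat $p$ as a formal variable. One regards $Q(k-1,\ell)$ as a matrix over the truncated ring $\ZZ[k]/k^{\lambda_1}$ and feeds the blocks
\[
   E_j \;=\; Q(k-1,\,js-1)\cdots Q(k-1,\,js-s) \pmod{k^{\lambda_1}}
\]
into an accumulating remainder tree that operates over $\ZZ[k]/k^\beta$ rather than over $\ZZ$ (this is exactly why Lemma~\ref{lem:avgpoly} is stated in that generality). The tree returns $E_{p-1}\cdots E_1 \pmod{k^{\lambda_1},\,p^{\lambda_1}}$ for all $p<N$; one then substitutes $k=p$ for each prime separately. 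The truncation at $k^{\lambda_1}$ is harmless because after the substitution the discarded terms lie in $p^{\lambda_1}\ZZ$. This extra ``$k$-layer'' inflates all bit sizes by a factor of $\lambda_1$, which is what produces the exponents in the stated bounds. Without this idea, the remainder tree has nothing $p$-independent to chew on, and your scheme does not get started. (A secondary point: the initial vector $[G^{(p-1)s}]_{p-1}$ and the scalar $((p-1)s)!$ also depend on $p$; the paper handles these by separate, genuinely $p$-independent, scalar recurrences---see the treatment of \eqref{eq:prod1} and \eqref{eq:prod2} in Proposition~\ref{prop:Ms-avgpoly}.)
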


The main idea of the proof is to apply the machinery of the ``accumulating remainder tree'' (ART) to the recurrence mentioned earlier.
The ART was introduced in~\cite{CGH-wilson} for the purpose of computing the Wilson quotients $(p-1)! \pmod{p^2}$ for many~$p$ simultaneously, and its subsequent generalisation to matrices played a central role in~\cite{Har-avgpoly}.

However, there is an important difference between~\cite{Har-avgpoly} and the present paper.
In~\cite{Har-avgpoly}, the ART was coupled with the technique of ``reduction towards zero''.
The latter involved an algebraic rearrangement of the problem to ensure that the matrices defining the recurrence were independent of~$p$.
In the present situation we have been unable to make the recurrence matrices independent of~$p$.
Instead, we propose the following workaround: we replace~$p$ by a formal variable~$k$, and run the ART algorithm over a truncated power series ring in~$k$.
Thus~$k$ plays the role of a ``generic prime'' that has not yet been specialised to an actual prime number.
At the very end, we specialise to $k = p$, separately for each prime~$p$.

(Incidentally, this shows that the difficulties that led to the introduction of ``reduction towards zero'' in the first place were to some extent a red herring.
That is, for hyperelliptic curves, one could obtain results similar to~\cite{Har-avgpoly} by combining the original reduction formulae from~\cite{Har-kedlaya} with the ``generic prime'' technique, and avoid ``reduction towards zero'' altogether.
It seems likely that this would improve the exponent of~$g$, the genus of the curve, in the complexity bound.
We have not yet checked the details, but we observe that for the simpler problem of computing Hasse--Witt matrices of hyperelliptic curves, analogous considerations explain much of the improvement in performance between~\cite{HS-hassewitt} and~\cite{HS-hassewitt2}.)

As above, for an arbitrary curve of genus~$g$ over~$\QQ$, we may apply Theorem~\ref{thm:hs-avgpoly} to a (possibly singular) plane model, and then correct for the exceptional points.
We thus expect to be able to compute the Euler factors of its zeta function for $p < N$ in time $g^{O(1)} N \log^{3+\eps} N$ (ignoring the dependence on the size of the coefficients of the polynomial defining the plane model).

\subsection{Arithmetic schemes}
We conclude this section by showing how to deduce Theorem~\ref{thm:scheme} from the hypersurface case.
\begin{proof}[of Theorem~\ref{thm:scheme}]
Let~$X$ be a scheme of finite type over~$\ZZ$.
We may assume that~$X$ is reduced, as the definition of $\zeta_X(s)$ depends only on the closed points of~$X$.
We will first show that $\zeta_X(s)$ may be expressed as a finite product $\prod_i \zeta_{X_i}(s)^{e_i}$, where each $e_i = \pm 1$, and where each~$X_i$ is a hypersurface in $\TT^{n_i}_\ZZ$ for some~$n_i$, in the sense of Theorem~\ref{thm:hs-avgpoly}.

For this, we will repeatedly use the fact that if~$X$ is a disjoint union $U \cup Y$, where~$U$ is an open subscheme and~$Y$ is a closed subscheme (with the reduced closed subscheme structure), then both~$U$ and~$Y$ are of finite type over~$\ZZ$, and $\zeta_X(s) = \zeta_U(s) \zeta_Y(s)$.

Since~$X$ is of finite type over~$\ZZ$, it has a finite cover by open affines, say $X = U_1 \cup \cdots \cup U_n$, where each~$U_i$ is the spectrum of a finitely generated $\ZZ$-algebra.
Then~$X$ is the disjoint union $U_1 \cup X'$ where $X' = (U_2 \cup \cdots \cup U_n) \setminus U_1 = (U_2 \setminus U_1) \cup \cdots \cup (U_n \setminus U_1)$.
The latter is a cover of~$X'$ by $n-1$ spectra of finitely generated $\ZZ$-algebras.
Applying the procedure recursively to~$X'$, we obtain a representation of~$X$ as a disjoint union $V_1 \cup \cdots \cup V_n$ where each~$V_i$ is the spectrum of a finitely generated $\ZZ$-algebra.
Thus $\zeta_X(s) = \prod_i \zeta_{V_i}(s)$, and we have reduced the problem to the case that~$X$ is the spectrum of a finitely generated $\ZZ$-algebra.

Thus let $X = \Spec \ZZ[x_1, \ldots, x_m]/(F_1, \ldots, F_k)$.
We apply the ``inclusion--exclusion trick'' of~\cite[\S3]{Wan-zeta}.
For each nonempty subset~$S \subseteq \{1, \ldots, k\}$, let $F_S = \prod_{i \in S} F_i$ and $X_S = \Spec \ZZ[x_1, \ldots, x_m]/(F_S)$.
Then $\zeta_X(s) = \prod_S \zeta_{X_S}(s)^{(-1)^{1+|S|}}$, so we have reduced to the case $X = \Spec \ZZ[x_1, \ldots, x_m]/(F)$, i.e., an affine hypersurface.

Finally, for such~$X$, for each subset $T \subseteq \{1, \ldots, m\}$, let~$F_T$ be the polynomial obtained from~$F$ by substituting $x_i = 0$ for each $i \notin T$, and let $X_T$ be the hypersurface defined by~$F_T$ in the affine torus $\Spec \ZZ[x_i : i \in T][x_0]/(1 - x_0 \prod_{i \in T} x_i)$.
Then $\zeta_X(s) = \prod_T \zeta_{X_T}(s)$, and we obtain the desired product representation.

Now return to the general case of a scheme~$X$ of finite type over~$\ZZ$, and let $\zeta_X(s) = \prod_i \zeta_{X_i}(s)^{e_i}$ be a suitable product representation as above.
The proof of the decomposition shows that it is compatible with the Euler product, i.e., for each~$p$ we have $\zeta_{X_p}(s) = \prod_i \zeta_{(X_i)_p}(s)^{e_i}$.
Theorems~\ref{thm:hs-linear},~\ref{thm:hs-sqrt} and~\ref{thm:hs-avgpoly} imply that parts (a), (b) and (c) of Theorem~\ref{thm:scheme} hold for each~$X_i$.
Also, for each~$i$, there exist integers~$d_i$ and~$c_i$ (not depending on~$p$) such that the numerator and denominator of $Z_{(X_i)_p}(T)$ are polynomials of degree at most~$d_i$, with coefficients bounded by~$p^{c_i}$.
Thus $Z_{(X_i)_p}(T)$ occupies space $O(\log p)$, and given $Z_{(X_i)_p}(T)$ for all~$i$, we may compute $Z_{X_p}(T) = \prod_i Z_{(X_i)_p}(T)^{e_i}$ in time $\log^{1+\eps} p$ and space $O(\log p)$.
(See Section~\ref{sec:complexity} for generalities on fast polynomial arithmetic.)
\end{proof}

One may also prove results similar to Theorem~\ref{thm:scheme}(a) and (b) for a scheme~$X$ of finite type over~$\FF_q$, $q = p^a$.
One approach is to apply Theorem~\ref{thm:scheme} to the scheme~$X'$ over~$\ZZ$ obtained by composing the morphism $X \to \Spec \FF_q$ with the morphism $\Spec \FF_q \to \Spec \ZZ$.
Note that this computes $Z_{(X')_p}(T) = Z_X(T^a)$ rather than $Z_X(T)$ directly.
A more efficient method is to use the same decomposition strategy as in the proof of Theorem~\ref{thm:scheme} to reduce to the case of hypersurfaces over~$\FF_q$, and then to apply Theorems~\ref{thm:hs-linear} and~\ref{thm:hs-sqrt} to each hypersurface.

\section{Basic complexity results}
\label{sec:complexity}

In this section we recall some basic complexity results that will be used freely throughout the paper.

Adding or subtracting $n$-bit integers may be achieved in time~$O(n)$.
Multiplication of $n$-bit integers, and division with remainder of $n$-bit integers, have time complexity $n \log^{1+\eps} n$ and space complexity $O(n)$ using fast Fourier transform and Newton iteration methods~\cite[Ch.~8, 9]{vzGG-compalg}.

Let $d \geq 1$ and $n \geq 1$. If $g, h \in \ZZ[t]$ have degree at most~$d$, and if the coefficients of~$g$, $h$ and~$gh$ have at most~$n$ bits, then~$gh$ may be computed in time $dn \log^{1+\eps}(dn)$ and space $O(dn)$ by Kronecker substitution~\cite[Ch.~8]{vzGG-compalg}.

Let~$p$ be a prime and let $q = p^a$, $a \geq 1$.
We represent the field~$\FF_q$ as $\FF_p[t]/\red f$, where $\red f \in \FF_p[t]$ is monic and irreducible, of degree~$a$.
We will always assume that~$\red f$ is given as input.
Elements of~$\FF_q$ are represented by polynomials $g \in \FF_p[t]$ of degree less than~$a$, and thus occupy space $O(a \log p)$.
Addition and subtraction in~$\FF_q$ require time $O(a \log p)$.
Multiplication in~$\FF_q$ has time and space complexity $(a \log p)^{1+\eps}$ and $O(a \log p)$, using the Cantor--Kaltofen algorithm~\cite[Ch.~8]{vzGG-compalg}.
(An alternative is Kronecker substitution, but this leads to suboptimal complexity bounds if~$a$ is very large relative to~$p$.)
Division in~$\FF_q$ has time and space complexity $(a \log p)^{1+\eps}$ and $O(a \log p)$, using the fast extended Euclidean algorithm~\cite[Ch.~11]{vzGG-compalg}.

We denote by~$\ZZ_q$ the ring of Witt vectors over~$\FF_q$, i.e., the ring of integers of the unique unramified extension of~$\QQ_p$ of degree~$a$, so that $\ZZ_q/p\ZZ_q \cong \FF_q$.
We will need to perform arithmetic in finite precision approximations $\ZZ_q/p^\lambda \ZZ_q$ for $\lambda \geq 1$.
To represent this ring, we choose an arbitrary lift $f \in (\ZZ/p^\lambda \ZZ)[t]$ of~$\red f$, monic of degree~$a$, so that $\ZZ_q/p^\lambda \ZZ_q \cong (\ZZ/p^\lambda\ZZ)[t]/f$.
Thus elements of $\ZZ_q/p^\lambda\ZZ_q$ are represented by polynomials $g \in (\ZZ/p^\lambda\ZZ)[t]$ of degree less than~$a$, and these occupy space $O(\lambda a \log p)$.
As above, addition and subtraction in $\ZZ_q/p^\lambda\ZZ_q$ have complexity $O(\lambda a \log p)$, and multiplication and division require time $\lambda \log^{1+\eps}(2\lambda) (a \log p)^{1+\eps}$ and space $O(\lambda a \log p)$.
(The cruder time bound $(\lambda a \log p)^{1+\eps}$ is not quite strong enough to prove the main results in the form we have stated them.)

Let $\phi : \FF_q \to \FF_q$ be the absolute Frobenius map $u \mapsto u^p$, so that~$\phi^a$ is the identity on~$\FF_q$.
For $0 \leq j < a$ we may compute $\phi^j(u) = u^{p^j}$ via ``binary powering'' using $O(\log(p^j)) = O(a \log p)$ multiplications in~$\FF_q$, i.e., in time $(a^2 \log^2 p)^{1+\eps}$ and space $O(a \log p)$~\cite[Ch.~4]{vzGG-compalg}.

We use the same notation~$\phi$ for the corresponding Frobenius map on~$\ZZ_q$, i.e., the unique automorphism of~$\ZZ_q$ that lifts $\phi : \FF_q \to \FF_q$.
To compute $\phi^j(u)$ for $u \in \ZZ_q/p^\lambda\ZZ_q$, for simplicity we use the following algorithm suggested in~\cite{Ked-hyperelliptic}.
First compute $\phi^j(t) \pmod p$ in~$\FF_q$, and use Newton's method to lift the result to a root~$\alpha$ of~$f$ in $\ZZ_q/p^\lambda\ZZ_q$.
Then evaluate~$g$ at~$\alpha$, where $g \in (\ZZ/p^\lambda\ZZ)[t]$ is the polynomial representing~$u$.
This costs time $\lambda \log^{1+\eps}(2\lambda) (a^2 \log^2 p)^{1+\eps}$ and space $O(\lambda a \log p)$.
To simplify matters later, we have included here the cost of all ``precomputations''.
Superior bounds may be achieved by more elaborate algorithms, such as that of~\cite{Hub-unramified}.

For $N \geq 2$, the set of primes $p < N$ may be enumerated in time $N \log^{2+\eps} N$ and space~$O(N)$.
For example, apply~\cite[Proposition 2.2]{CGH-wilson} to a sequence of intervals of width $O(N / \log N \log \log N)$.

Suppose that~$A$ is an $m \times n$ array of objects of bit size~$\ell$.
In the Turing model, we may transpose the array, i.e., switch from ``row-major'' to ``column-major'' order, in time $O(mn\ell \log \min(m, n))$ and space $O(m n \ell)$~\cite[Lemma 18]{BGS-recurrences}.
Such transposition steps will occur frequently in our algorithms; for example, in the proof of Lemma~\ref{lem:bd-zeta1}, we must transpose after the remainder tree, and again just before the fast Chinese remaindering.
The cost of the transpositions will always be dominated by the cost of actual arithmetic, and we will not mention it again.

The remaining results are stated in terms of an algebraic complexity model over a ring~$R$, where ``time'' means the number of ring operations in~$R$, and ``space'' counts the number of elements of~$R$ that must be stored at any point during a computation.
We leave it to the reader to formulate the corresponding statements in the Turing model, for the specific rings that arise.

For computing the product of two $m \times m$ matrices over a ring~$R$, for simplicity we will use the classical algorithm.
This costs $O(m^3)$ ring operations, and uses space $O(m^2)$.

For multivariate polynomials, say in $R[x_0, \ldots, x_n]$, we always assume that the dense representation is used.
For example, if $H \in R[x_0, \ldots, x_n]$ is homogeneous of degree~$d$, we will assume that the monomials of degree~$d$ are ordered (say) lexicographically, and that the coefficients are presented in a linear array corresponding to this ordering.

If $G, H \in R[x_0, \ldots, x_n]$ are homogeneous of degree at most~$d$, their product may be computed using multivariate Kronecker substitution, i.e.,~$GH$ may be recovered from the univariate (non-homogeneous) product
 \[ G(1, t, t^{2d}, \ldots, t^{(2d)^{n-1}}) H(1, t, t^{2d}, \ldots, t^{(2d)^{n-1}}) \in R[t]. \]
By the Cantor--Kaltofen theorem, this may be achieved using $(2d)^n \log^{1+\eps}((2d)^n)$ ring operations in~$R$.

\section{The trace formula}
\label{sec:trace}

\subsection{The trace formula}

For a domain~$R$ and an integer $n \geq 1$, we denote the multivariate polynomial ring $R[x_0, \ldots, x_n]$ by simply~$R[x]$.
If $u = (u_0, \ldots, u_n) \in \ZZ^{n+1}$, we write $\deg u = u_0 + \cdots + u_n$, and denote by~$F_u$ the coefficient of $x^u = x_0^{u_0} \cdots x_n^{u_n}$ in~$F$, with the understanding that $F_u = 0$ if any component~$u_i$ is negative.
For $k \geq 0$ we denote by $R[x]_k$ the submodule of homogeneous polynomials of degree~$k$.
It is spanned by the monomials~$x^u$ for $u \in B_k = \{u \in \NN^{n+1}: \deg u = k\}$.
Its rank over~$R$ is $|B_k| = \binom{n + k}n$.

Now let~$p$ be a prime, $a \geq 1$ and $q = p^a$.
Define maps $\phi, \psi : \ZZ_q[x] \to \ZZ_q[x]$ by
 \[ \phi(G) = \sum_u \phi(G_u) x^{pu}, \qquad \psi(G) = \sum_u \phi^{-1}(G_{pu}) x^u. \]
Then~$\psi$ is a left inverse of~$\phi$, i.e., $\psi \circ \phi$ is the identity on $\ZZ_q[x]$.
In general~$\phi$ and~$\psi$ are not $\ZZ_q$-linear, but they are respectively $\phi$-semilinear and $\phi^{-1}$-semilinear, i.e.~$\phi(bG) = \phi(b)\phi(G)$ and $\psi(bG) = \phi^{-1}(b)\psi(G)$ for $b \in \ZZ_q$.
In particular, since~$\phi^a$ is the identity on~$\ZZ_q$, we have
\begin{equation}
\label{eq:psi-power}
 \psi^a(G) = \sum_u G_{qu} x^u.
\end{equation}

For $k \geq 1$ and $H \in \ZZ_q[x]_k$, let $T_H : \ZZ_q[x] \to \ZZ_q[x]$ be the multiplication operator $G \mapsto HG$, and let
 \[ A_H = \psi \circ T_{H^{p-1}}. \]
Note that~$A_H$ is $\phi^{-1}$-semilinear, and maps $\ZZ_q[x]_k$ into $\ZZ_q[x]_k$, because if $\deg G = k$, then $\deg \psi(H^{p-1} G) = (k(p-1) + k) / p = k$.
For $m \geq 1$, we also define
 \[ H^{(m)} = (H \cdot \phi(H) \cdots \phi^{m-1}(H))^{p-1}. \]
It follows immediately that
\begin{equation}
\label{eq:A-power}
 A_H^m = \psi^m \circ T_{H^{(m)}}
\end{equation}
for any $m \geq 1$.

\begin{thm}[(Trace formula)]
\label{thm:trace}
Let $\red F \in \FF_q[x]_d$ and let~$X$ be the hypersurface in $\TT^n_{\FF_q}$ cut out by~$\red F$.
Let~$r$, $\lambda$ and~$\tau$ be positive integers satisfying
\begin{equation} 
\label{eq:assumption}
 \tau \geq \frac{\lambda}{(p-1)ar}.
\end{equation}
Let $F \in \ZZ_q[x]_d$ be any lift of~$\red F$.
Then
 \[ |X(\FF_{q^r})| = (q^r - 1)^n \sum_{s=0}^{\lambda + \tau - 1} \alpha_s \trace (A_{F^s}^{ar}) \pmod{p^\lambda}, \]
where
 \[ \alpha_s = (-1)^s \sum_{t=0}^{\tau - 1} \binom{-\lambda}t \binom{\lambda}{s-t} \in \ZZ, \]
and where $A_{F^s}$ is regarded as a linear operator on $\ZZ_q[x]_{ds}$.
\end{thm}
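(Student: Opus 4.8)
The plan is to evaluate both sides of the claimed congruence over the Teichm\"uller lifts of the $\FF_{q^r}$-points of the torus and match the results modulo~$p^\lambda$. Write $\mu \subset \ZZ_{q^r}^*$ for the group of $(q^r-1)$-th roots of unity, so that reduction modulo~$p$ identifies~$\mu$ with~$\FF_{q^r}^*$, and the point count recalled before the theorem becomes
\[ |X(\FF_{q^r})| = \frac{1}{q^r-1}\,\bigl|\{\zeta \in \mu^{n+1} : \red F(\red\zeta) = 0\}\bigr|, \]
where~$\red\zeta$ denotes the reduction of~$\zeta$. Put $N = F \cdot \phi(F) \cdots \phi^{ar-1}(F) \in \ZZ_q[x]$, homogeneous of degree $d(q^r-1)/(p-1)$. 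The first key point is that for $\zeta \in \mu$ one has $\phi^j(F)(\zeta) = \tilde\phi^j(F(\zeta))$, where $\tilde\phi$ is the Frobenius automorphism of~$\ZZ_{q^r}$ over~$\ZZ_p$; this follows from $\zeta^{p^j} = \tilde\phi^j(\zeta)$ together with $\tilde\phi|_{\ZZ_q} = \phi$. Hence $N(\zeta) = \operatorname{Nm}_{\QQ_{q^r}/\QQ_p}(F(\zeta)) \in \ZZ_p$, so $N(\zeta) \in \ZZ_p^*$ when $\red F(\red\zeta) \neq 0$, while $v_p(N(\zeta)) \geq ar$ when $\red F(\red\zeta) = 0$, since then $F(\zeta)$ and all of its $\tilde\phi$-conjugates are divisible by~$p$.

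Next I would rewrite the trace side. By \eqref{eq:A-power} and \eqref{eq:psi-power}, $A_{F^s}^{ar} = \psi^{ar} \circ T_{N^{s(p-1)}}$ (a $\ZZ_q$-linear operator, as $\phi^{ar}$ is the identity), so $A_{F^s}^{ar}(x^v) = \psi^{ar}(N^{s(p-1)} x^v)$ has $x^v$-coefficient $(N^{s(p-1)})_{(q^r-1)v}$, whence $\trace(A_{F^s}^{ar}) = \sum_{v \in B_{ds}} (N^{s(p-1)})_{(q^r-1)v}$. On the other hand $\sum_{\zeta \in \mu} \zeta^m$ equals $q^r-1$ if $(q^r-1) \divides m$ and~$0$ otherwise, so multiplying out and using homogeneity of~$N^{s(p-1)}$ gives
\[ \sum_{\zeta \in \mu^{n+1}} N(\zeta)^{s(p-1)} = (q^r-1)^{n+1}\,\trace(A_{F^s}^{ar}). \]

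Then I would analyse the combinatorial polynomial $g(z) = \sum_{s=0}^{\lambda+\tau-1} \alpha_s z^{s(p-1)}$. Setting $w = z^{p-1}$, interchanging the order of summation and collapsing the inner sum via $\sum_s (-1)^s \binom{\lambda}{s-t} w^s = (-1)^t w^t (1-w)^\lambda$ shows that $g(z) = (1-w)^\lambda P(w)$, where $P(w) = \sum_{t=0}^{\tau-1} \binom{-\lambda}{t}(-w)^t$ is the degree-$(\tau-1)$ truncation of $(1-w)^{-\lambda}$; hence $g(z) \in (1-w)^\lambda \ZZ[w]$ and $1 - g(z) \in w^\tau \ZZ[w]$. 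Evaluating at $z = N(\zeta)$: if $\red F(\red\zeta) \neq 0$ then $w \equiv 1 \pmod p$ by Fermat, so $(1-w)^\lambda \equiv 0 \pmod{p^\lambda}$ and $g(N(\zeta)) \equiv 0$; if $\red F(\red\zeta) = 0$ then $v_p(w) = (p-1)\,v_p(N(\zeta)) \geq (p-1)ar$, so $v_p(1 - g(N(\zeta))) \geq \tau(p-1)ar \geq \lambda$ by~\eqref{eq:assumption} and $g(N(\zeta)) \equiv 1$. Summing over~$\zeta$ yields $\sum_{\zeta \in \mu^{n+1}} g(N(\zeta)) \equiv \bigl|\{\zeta : \red F(\red\zeta) = 0\}\bigr| \pmod{p^\lambda}$.

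Finally I would assemble the pieces. Since $\sum_\zeta g(N(\zeta)) = \sum_s \alpha_s \sum_\zeta N(\zeta)^{s(p-1)} = (q^r-1)^{n+1} \sum_s \alpha_s \trace(A_{F^s}^{ar})$, dividing the last congruence by the $p$-adic unit $q^r-1$ gives exactly $|X(\FF_{q^r})| \equiv (q^r-1)^n \sum_{s=0}^{\lambda+\tau-1} \alpha_s \trace(A_{F^s}^{ar}) \pmod{p^\lambda}$. The step I expect to carry the most weight is the valuation bookkeeping in the first paragraph: one has to recognise that it is the norm $\operatorname{Nm}_{\QQ_{q^r}/\QQ_p}(F(\zeta))$, rather than $F(\zeta)$ itself, that controls vanishing, so that a zero of~$\red F$ forces $v_p \geq ar$ rather than merely $v_p \geq 1$ --- precisely the gain that makes the short range $0 \leq s \leq \lambda + \tau - 1$ with $\tau \geq \lambda/((p-1)ar)$ suffice. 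The telescoping identity for~$g$ and the character-orthogonality computation are then routine.
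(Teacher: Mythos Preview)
Your proof is correct and follows essentially the same route as the paper's: both build an ``indicator polynomial'' out of $\sum_s \alpha_s H^s$ (your $g(N(\zeta))$ with $w=N(\zeta)^{p-1}$ is exactly the paper's $J(c)$ with $H=N^{p-1}$), verify it is $\equiv 1$ or $\equiv 0 \pmod{p^\lambda}$ according to whether $\red F(\red\zeta)$ vanishes, sum over Teichm\"uller points via character orthogonality, and identify the result with the traces using \eqref{eq:psi-power}--\eqref{eq:A-power}. Your recognition of $N(\zeta)$ as $\operatorname{Nm}_{\QQ_{q^r}/\QQ_p}(F(\zeta))$ is a pleasant conceptual gloss on the valuation estimate $v_p(H(c))\geq (p-1)ar$ that the paper obtains by direct inspection, but the underlying argument is the same.
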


Before giving the proof, we comment briefly on hypothesis~\eqref{eq:assumption}.
If $p \geq 1 + \frac{\lambda}{ar}$, then~\eqref{eq:assumption} is satisfied for $\tau = 1$, and the trace formula becomes simply
 \[ |X(\FF_{q^r})| = (q^r - 1)^n \sum_{s=0}^\lambda (-1)^s \binom{\lambda}s \trace(A_{F^s}^{ar}) \pmod{p^\lambda}. \]
For smaller~$p$ one may need to take $\tau > 1$, and then the trace formula involves higher powers of~$F$.
Thus~$\tau$ should be thought of as a ``fudge factor'' that corrects the trace formula for small~$p$.

\begin{proof*}
Let $H = F^{(ar)}$ and let
 \[ J = \sum_{s=0}^{\lambda+\tau-1} \alpha_s H^s = (1 - H)^\lambda \sum_{t=0}^{\tau - 1} (-1)^t \binom{-\lambda}t H^t. \]
Note that~$H$ is homogeneous, but~$J$ is not.
We claim that for any $c \in (\ZZ_{q^r}^*)^{n+1}$,
\begin{equation}
\label{eq:indicator}
 J(c) = \begin{cases} 1 \pmod{p^\lambda} & \text{if $\red F(\red c) = 0$}, \\
                      0 \pmod{p^\lambda} & \text{otherwise}, \end{cases}
\end{equation}
where~$\red c$ is the image of~$c$ in $(\FF_{q^r}^*)^{n+1}$.
Indeed, if $\red F(\red c) \neq 0$, then 
 \[ H(c) = (\red F(\red c) \red F(\red c)^p \cdots \red F(\red c)^{p^{ar-1}})^{p-1} = \red F(\red c)^{q^r - 1} = 1 \pmod p. \]
Thus $(1 - H(c))^\lambda = 0 \pmod{p^\lambda}$, so $J(c) = 0 \pmod{p^\lambda}$.
On the other hand, suppose that $\red F(\red c) = 0$.
Then $H(c) = 0 \pmod{p^{(p-1)ar}}$, so by~\eqref{eq:assumption} we have $H(c)^t = 0 \pmod{p^\lambda}$ for all $t \geq \tau$.
In particular
 \[ \sum_{t=0}^{\tau-1} (-1)^t \binom{-\lambda}t H(c)^t = \sum_{t=0}^{\infty} (-1)^t \binom{-\lambda}t H(c)^t = (1 - H(c))^{-\lambda} \pmod{p^\lambda}, \]
so $J(c) = 1 \pmod{p^\lambda}$.

Now let~$\Sigma$ be the set of $(q^r-1)$-th roots of unity in $\ZZ_{q^r}$, i.e., the set of Teichm\"uller lifts of elements of $\FF_{q^r}^*$.
For any $w \in \NN^{n+1}$ we have
 \[ \sum_{c \in \Sigma^{n+1}} c^w = \sum_{c_0 \in \Sigma} c_0^{w_0} \cdots \sum_{c_n \in \Sigma} c_n^{w_n} = \begin{cases} (q^r - 1)^{n+1} & \text{if $q^r - 1 \divides w_i$ for all $0 \leq i \leq n$}, \\ 0 & \text{otherwise}, \end{cases} \]
so summing~\eqref{eq:indicator} over $c \in \Sigma^{n+1}$ we obtain
 \[ |X(\FF_{q^r})| = (q^r - 1)^n \sum_{w \in \NN^{n+1}} J_{(q^r - 1)w}  \pmod{p^\lambda}. \]
Since $\deg H^s = d(1 + p + \cdots + p^{ar-1})(p-1)s = ds(q^r - 1)$, this becomes
 \[ |X(\FF_{q^r})| = (q^r - 1)^n \sum_{s=0}^{\lambda+\tau-1} \alpha_s \sum_{w \in B_{ds}} (H^s)_{(q^r - 1)w} \pmod{p^\lambda}. \]
By~\eqref{eq:psi-power} we have
 \[ \sum_{w \in B_{ds}} (H^s)_{(q^r - 1)w} = \sum_{w \in B_{ds}} (H^s x^w)_{q^r w} = \sum_{w \in B_{ds}} (\psi^{ar}(H^s x^w))_w, \]
and then since $H^s = (F^s)^{(ar)}$, equation~\eqref{eq:A-power} shows that this is equal to
\multbox
\begin{eqnarray*}
 \sum_{w \in B_{ds}} (A_{F^s}^{ar} x^w)_w = \trace(A_{F^s}^{ar}).
\end{eqnarray*}
\emultbox
\end{proof*}

We may give a more computationally explicit description of $A_{F^s}^{ar}$ as follows.
\begin{lem}
\label{lem:M-prod}
Let $F \in \ZZ_q[x]_d$.
The matrix of $A_{F^s}^a$ on $\ZZ_q[x]_{ds}$, with respect to the basis~$B_{ds}$, is given by
 \[ \phi^{a-1}(M_s) \cdots \phi(M_s) M_s, \]
where~$M_s$ is the square matrix defined by
 \[ (M_s)_{v,u} = (F^{(p-1)s})_{pv - u} \]
for $u, v \in B_{ds}$, and where~$\phi$ acts componentwise on matrices.
\end{lem}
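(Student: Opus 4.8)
The plan is to unwind the definitions of $A_{F^s}$ and $\phi$ and track the action on the basis $B_{ds}$, reducing everything to a bookkeeping statement about the coefficient maps. First I would recall from \eqref{eq:A-power} that $A_{F^s}^a = \psi^a \circ T_{(F^s)^{(a)}}$, and that $(F^s)^{(a)} = (F^s \cdot \phi(F^s) \cdots \phi^{a-1}(F^s))^{p-1} = (F^{(p-1)s}) \cdot \phi(F^{(p-1)s}) \cdots$ after expanding the definition of $H^{(m)}$; but rather than push the whole $a$-fold product through $\psi^a$ at once, I would instead establish the single-step identity that the matrix of $A_{F^s}$ on $\ZZ_q[x]_{ds}$ — viewed as a $\phi^{-1}$-semilinear operator, so its ``matrix'' is taken after stripping the semilinear twist — is exactly $M_s$ as defined, and then iterate.

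Concretely, the key step is: for $u \in B_{ds}$, compute $A_{F^s}(x^u) = \psi(F^{(p-1)s} x^u)$. Writing $F^{(p-1)s} = \sum_w (F^{(p-1)s})_w x^w$, we get $F^{(p-1)s} x^u = \sum_w (F^{(p-1)s})_w x^{w+u}$, and applying $\psi(G) = \sum_v \phi^{-1}(G_{pv}) x^v$ picks out those terms with $w + u = pv$, giving $A_{F^s}(x^u) = \sum_{v \in B_{ds}} \phi^{-1}\big((F^{(p-1)s})_{pv-u}\big) x^v$. (The degree check $\deg(F^{(p-1)s} x^u) = d(p-1)s + ds = dps$, divided by $p$, gives $ds$, so the sum really is over $B_{ds}$.) Thus $A_{F^s} = \sigma \circ L_{M_s}$ where $L_{M_s}$ is the $\ZZ_q$-linear map with matrix $M_s$ and $\sigma$ applies $\phi^{-1}$ to coordinates. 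Composing $a$ copies and using $\phi$-semilinearity to slide the twists past the linear maps, $A_{F^s}^a$ has matrix $\phi^{-(a-1)}$-twisted products collapsing — since $\phi^{-a} = \phi^{a - a} $ acts trivially after $a$ steps — to $\phi^{a-1}(M_s) \cdots \phi(M_s) M_s$ up to an overall $\phi^{-a}$ on the coordinates, which is the identity on $\ZZ_q$. I would verify the exponent bookkeeping carefully: $(\sigma L_{M_s})^a = \sigma^a \cdot (\sigma^{-(a-1)} L_{M_s} \sigma^{a-1}) \cdots (\sigma^{-1} L_{M_s} \sigma) L_{M_s}$, and $\sigma^{-j} L_{M_s} \sigma^{j} = L_{\phi^{j}(M_s)}$ by semilinearity, with $\sigma^a = \mathrm{id}$ since $\phi^a = \mathrm{id}$ on $\ZZ_q$.

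The main obstacle I anticipate is purely notational rather than conceptual: keeping the direction of the Frobenius twists straight (whether each $M_s$ picks up $\phi$ or $\phi^{-1}$, and in which order the factors accumulate), since $A_{F^s}$ is $\phi^{-1}$-semilinear but the stated answer involves positive powers of $\phi$. The resolution is that the ``matrix of a $\phi^{-1}$-semilinear operator'' convention means we factor out $\phi^{-1}$ on the source side, and after $a$ compositions the net twist is $\phi^{-a} = \mathrm{id}$, while the individual matrices get conjugated by successive powers of $\phi$, landing on $\phi^{a-1}(M_s) \cdots \phi(M_s) M_s$. A secondary point to check is that the entries $(F^{(p-1)s})_{pv-u}$ are interpreted with the convention $F_u = 0$ for negative components, so that $M_s$ is genuinely indexed by the finite set $B_{ds} \times B_{ds}$; this follows since $pv - u$ has nonnegative degree $dps - ds = d(p-1)s = \deg F^{(p-1)s}$ but could a priori have a negative component, in which case the coefficient vanishes by convention. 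Once these conventions are pinned down the proof is a two-line computation followed by the semilinear iteration.
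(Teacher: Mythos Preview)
Your proposal is correct and follows essentially the same approach as the paper: compute the single-step action $(A_{F^s} x^u)_v = \phi^{-1}((F^{(p-1)s})_{pv-u}) = \phi^{-1}((M_s)_{v,u})$, then iterate $a$ times using $\phi^{-1}$-semilinearity and $\phi^a = \mathrm{id}$. The paper writes this as $[A_{F^s} G] = \phi^{-1}(M_s)\,\phi^{-1}([G])$ and iterates directly, whereas you phrase the same thing as $A_{F^s} = \sigma \circ L_{M_s}$ and use the conjugation identity $\sigma^{-j} L_{M_s} \sigma^{j} = L_{\phi^{j}(M_s)}$; these are the same computation in slightly different notation.
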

\begin{proof*}
For $G \in \ZZ_q[x]_{ds}$, write~$[G]$ for the coordinate vector of~$G$ with respect to~$B_{ds}$.
Observe that
 \[ (A_{F^s} x^u)_v = (\psi(F^{(p-1)s} x^u))_v = \phi^{-1}((F^{(p-1)s} x^u)_{pv}) = \phi^{-1}((M_s)_{v,u}), \]
and so, since~$A_{F^s}$ is $\phi^{-1}$-semilinear,
 \[ [A_{F^s} G] = \phi^{-1}(M_s) \phi^{-1}([G]). \]
Iterating~$a$ times, and using the fact that~$\phi^a$ is the identity on~$\ZZ_q$, we find that
\multbox
\begin{eqnarray*}
 [A_{F^s}^a G] = \phi^{a-1}(M_s) \cdots \phi(M_s) M_s [G].
\end{eqnarray*}
\emultbox
\end{proof*}

\subsection{Complexity of evaluating the trace formula}

Next we carry out a straightforward estimate of the complexity of evaluating the trace formula to determine $Z_X(T)$, assuming that the~$M_s$ are known.
In subsequent sections we will study efficient algorithms for computing the~$M_s$ themselves.

\begin{lem}
\label{lem:bd-zeta2}
Let $\lambda = 2na(4d+4)^n$.
Given as input $M_s \pmod{p^\lambda}$ for $1 \leq s \leq 2\lambda$, we may compute $Z_X(T)$ in time
 \[ 2^{6n^2 + 13n} n^{3n+3+\eps} (d+1)^{3n^2+6n+\eps} a^{3n+4+\eps} \log^{2+\eps} p \]
and space
 \[ O(2^{4n^2 + 9n} n^{2n+2} (d+1)^{2n^2+4n} a^{2n+3} \log p). \]
\end{lem}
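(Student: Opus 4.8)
The plan is to apply the trace formula (Theorem~\ref{thm:trace}) with a choice of parameters that determines $Z_X(T)$ unambiguously, and then to bound the cost of each arithmetic step, working throughout in $\ZZ_q/p^\lambda\ZZ_q$. First I would fix~$r$ ranging over $1 \leq r \leq 2(4d+4)^n$; by (a forthcoming) Lemma~\ref{lem:bd-zeta1}, knowing $|X(\FF_{q^r})| \bmod p^\lambda$ for these~$r$, with $\lambda = 2na(4d+4)^n$ chosen so that $p^\lambda$ exceeds twice the archimedean bound on the relevant integers, pins down the numerator and denominator of $Z_X(T)$ exactly. For each such~$r$ one sets $\tau = \lceil \lambda/((p-1)ar)\rceil$, so that $\tau \leq \lambda$ and the number of terms $\lambda + \tau - 1 \leq 2\lambda$; this is exactly why the input is $M_s$ for $1 \leq s \leq 2\lambda$.

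The computational core has three stages. Stage one: from each $M_s \bmod p^\lambda$ (a square matrix of size $|B_{ds}| = \binom{n+ds}{n} = O((ds)^n/n!)$, with $ds \leq 2d\lambda$), compute the matrix of $A_{F^s}^{ar}$ on $\ZZ_q[x]_{ds}$. By Lemma~\ref{lem:M-prod} this is $\phi^{a-1}(M_s)\cdots\phi(M_s)M_s$ for $A_{F^s}^a$, raised to the $r$-th power; so I would form the $a$-fold Frobenius-twisted product (using the Frobenius-on-$\ZZ_q/p^\lambda\ZZ_q$ cost from Section~\ref{sec:complexity}) and then binary-power in~$r$, using classical $O(m^3)$ matrix multiplication with $m = |B_{d\cdot 2\lambda}|$. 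Stage two: extract the trace, multiply by $(q^r-1)^n \bmod p^\lambda$, and accumulate $\sum_s \alpha_s \trace(A_{F^s}^{ar})$, where the $\alpha_s$ are computed from the closed form via $O(\lambda)$ binomial-coefficient evaluations; reducing the result mod~$p^\lambda$ and then lifting the (bounded) integer out of $\ZZ/p^\lambda\ZZ$ gives $|X(\FF_{q^r})|$ exactly. Stage three: feed the $|X(\FF_{q^r})|$ into the reconstruction of Lemma~\ref{lem:bd-zeta1} (a logarithmic-derivative / Newton-identity computation over~$\QQ$, then a rational-function recovery), which costs a lower-order amount relative to the matrix arithmetic.

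For the complexity bookkeeping I would track the dominant quantities: the matrix size is $m \leq \binom{n + 2d\lambda}{n} \leq (2d\lambda+1)^n$, which with $\lambda = 2na(4d+4)^n$ becomes roughly $2^{O(n^2)} n^{O(n)} (d+1)^{O(n^2)} a^{O(n)}$; one matrix multiplication over $\ZZ_q/p^\lambda\ZZ_q$ costs $O(m^3)$ ring operations, each ring operation costing $\lambda \log^{1+\eps}(2\lambda)(a\log p)^{1+\eps}$ bit operations; the binary powering in~$r$ and in~$a$ contributes $O(\log(ar)) = (a(d+1)^n)^{\eps}$ extra factors; and there are $O((4d+4)^n)$ values of~$r$ and $O(\lambda)$ values of~$s$. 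Multiplying these out, the exponents of $2$, $n$, $d+1$, $a$ and $\log p$ should collapse to those in the statement — I expect the $\log^{2+\eps} p$ to come from one factor of $\log^{1+\eps} p$ per ring multiplication times one more from the $\lambda \sim \log$-independent but $p$-independent overhead, and the $a^{3n+4+\eps}$ from $a$ inside $\lambda^n$ (three of them, schematically from $m^3$) plus lower-order $a$ from the Frobenius and ring-multiplication costs. The space bound is dominated by storing one matrix over $\ZZ_q/p^\lambda\ZZ_q$, i.e. $m^2 \cdot \lambda a \log p = O(2^{4n^2+9n} n^{2n+2}(d+1)^{2n^2+4n} a^{2n+3}\log p)$, which matches the claim.

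The main obstacle is the exponent arithmetic, not any conceptual difficulty: one has to choose $\lambda$, the range of~$r$, and the bound on~$\tau$ so that (i) the trace formula's hypothesis~\eqref{eq:assumption} holds for every~$r$ in range with $\tau \leq \lambda$, (ii) $p^\lambda$ is genuinely large enough for the exact reconstruction in Lemma~\ref{lem:bd-zeta1}, and (iii) the resulting powers of~$2$, $n$, $d+1$ and~$a$ in $m = |B_{ds}|^{\text{max}}$, after being cubed by the matrix multiplication and multiplied by the $O((4d+4)^n)$ loop over~$r$ and the $O(\lambda)$ loop over~$s$, land exactly on the stated exponents $3n^2+6n$, $3n+3$, $3n+4$ rather than something slightly larger. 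I would handle this by being generous but explicit at each inequality — e.g. bounding $\binom{n+ds}{n} \leq (ds+1)^n \leq (2d\lambda+1)^n$ crudely — and absorbing all the lower-order $\log$-type and $\eps$-type slack into the $\log^{2+\eps} p$ and the $n^{3n+3+\eps}$, $(d+1)^{3n^2+6n+\eps}$, $a^{3n+4+\eps}$ markers. No smoothness of~$X$ is used anywhere; the argument is purely a cost analysis of the trace formula plus Lemma~\ref{lem:M-prod}.
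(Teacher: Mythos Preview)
Your proposal is correct and follows essentially the same route as the paper: apply the trace formula with $\tau=\lceil\lambda/((p-1)ar)\rceil\leq\lambda$ so that $s$ ranges only up to $2\lambda$, build $A_{F^s}^{ar}$ from $M_s$ via Lemma~\ref{lem:M-prod}, take traces, and feed the resulting $|X(\FF_{q^r})|$ into Lemma~\ref{lem:bd-zeta1}. Two small points of difference: the paper computes the traces for all $r\leq 2D$ by successive powers of the matrix of $A_{F^s}^a$ rather than binary-powering for each $r$ separately (saving a harmless $\log D$), and the second $\log p$ in the $\log^{2+\eps}p$ term actually comes from the cost of applying $\phi^j$ to matrix entries (which is $(a^2\log^2 p)^{1+\eps}$ per entry, as in Section~\ref{sec:complexity}), not from a generic overhead factor.
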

For the proof we need two preliminary results.
\begin{lem}
\label{lem:bd-trace}
Let $\lambda \geq 1$, $1 \leq s \leq 2\lambda$ and $D \geq 1$.
Given as input $M_s \pmod{p^\lambda}$, we may compute $\trace(A_{F^s}^{ar}) \pmod{p^\lambda}$ for all $1 \leq r \leq 2D$ in time
 \[ D (n+2d\lambda)^{3n} \lambda \log^{1+\eps}(2\lambda) a^{2+\eps} \log^{2+\eps} p \]
and space
 \[ O(( D + (n + 2d\lambda)^{2n}) \lambda a \log p). \]
\end{lem}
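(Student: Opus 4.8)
The plan is to reduce the computation of $\trace(A_{F^s}^{ar})$ to linear algebra over $\ZZ_q/p^\lambda\ZZ_q$ via Lemma~\ref{lem:M-prod}. Let $W = |B_{ds}| = \binom{n+ds}{n}$ denote the size of the basis; since $1 \leq s \leq 2\lambda$ and $d \geq 1$ we have $ds \leq 2d\lambda$, so $W \leq \binom{n+2d\lambda}{n} \leq (n+2d\lambda)^n$, which governs the matrix dimensions throughout. First I would compute, from the given $M_s \pmod{p^\lambda}$, the Frobenius conjugates $\phi(M_s), \phi^2(M_s), \ldots, \phi^{a-1}(M_s)$ by applying $\phi$ componentwise; each entry is an element of $\ZZ_q/p^\lambda\ZZ_q$, and by the complexity discussion in Section~\ref{sec:complexity} one application of $\phi$ (including precomputation of the lifted root) costs $\lambda \log^{1+\eps}(2\lambda)(a^2\log^2 p)^{1+\eps}$, for a total of $O(W^2 a)$ such operations. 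Then, by Lemma~\ref{lem:M-prod}, the matrix of $A_{F^s}^a$ is the product $\phi^{a-1}(M_s)\cdots\phi(M_s)M_s$; call it $N_s$. Forming this product takes $a-1$ matrix multiplications of $W\times W$ matrices over $\ZZ_q/p^\lambda\ZZ_q$, i.e.\ $O(a W^3)$ ring operations, each ring operation costing $\lambda\log^{1+\eps}(2\lambda)(a\log p)^{1+\eps}$.

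Next, since $A_{F^s}^{ar} = (A_{F^s}^a)^r$ and $A_{F^s}^a$ is $\ZZ_q$-linear (being the $a$-th power of a $\phi^{-1}$-semilinear map, with $\phi^a$ the identity), the matrix of $A_{F^s}^{ar}$ is simply $N_s^r$. To obtain $\trace(N_s^r)$ for all $1 \leq r \leq 2D$ I would compute the power sums of the eigenvalues of $N_s$: these can be read off iteratively from the characteristic polynomial, or more directly, one computes $N_s, N_s^2, N_s^4, \ldots$ and builds up $N_s^r$ for $r = 1, \ldots, 2D$ by repeated multiplication by $N_s$, taking the trace at each step. This requires $O(D)$ further $W\times W$ matrix multiplications, hence $O(D W^3)$ ring operations. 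Combining, the dominant arithmetic cost is $O((a+D)W^3)$ ring operations in $\ZZ_q/p^\lambda\ZZ_q$; multiplying by the per-operation cost $\lambda\log^{1+\eps}(2\lambda)(a\log p)^{1+\eps}$ and absorbing $a W^3$ into $D W^3$ when $a \leq D$ (and otherwise noting that the $\phi$-computations, at $O(W^2 a)$ operations of cost $\lambda\log^{1+\eps}(2\lambda)(a^2\log^2 p)^{1+\eps}$ each, are already dominated), one arrives at a bound of the shape $D W^3 \lambda \log^{1+\eps}(2\lambda) a^{2+\eps}\log^{2+\eps} p$, and $W^3 \leq (n+2d\lambda)^{3n}$ gives the stated time complexity.

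For the space bound: storing a single $W\times W$ matrix over $\ZZ_q/p^\lambda\ZZ_q$ costs $O(W^2 \lambda a \log p)$, and storing the $2D$ trace values costs $O(D\lambda a\log p)$ since each trace lies in $\ZZ_q/p^\lambda\ZZ_q$ and occupies $O(\lambda a\log p)$ bits. At any moment we hold only a bounded number of such matrices (the running power $N_s^r$, the fixed $N_s$, and one scratch matrix) plus the list of accumulated traces, so the total is $O((W^2 + D)\lambda a\log p) = O((D + (n+2d\lambda)^{2n})\lambda a\log p)$, as claimed.

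The main obstacle I anticipate is bookkeeping the per-operation costs carefully enough to land the exponents exactly as stated — in particular, tracking that Frobenius applications on $\ZZ_q/p^\lambda\ZZ_q$ carry the extra factor of $a\log p$ relative to ordinary multiplication (hence the $a^{2+\eps}$ rather than $a^{1+\eps}$ in the time bound), and verifying that these are genuinely dominated by the matrix multiplications rather than forming a separate dominant term. A secondary point requiring care is that the hypothesis only furnishes $M_s$, not the $\phi^j(M_s)$, so the conjugation step must be included in the accounting; but since it contributes only $O(W^2 a)$ Frobenius applications against the $\Omega(a W^3)$ ring multiplications needed to form $N_s$, it is comfortably absorbed.
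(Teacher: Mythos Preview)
Your overall shape is right --- Lemma~\ref{lem:M-prod} reduces everything to forming the ``norm'' $N_s = \phi^{a-1}(M_s)\cdots\phi(M_s)M_s$ and then taking $2D$ successive powers --- and your space analysis is fine. The gap is in how you form $N_s$.

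You propose to compute all $a-1$ conjugates $\phi^j(M_s)$ and multiply them up linearly, for $O(W^2 a)$ scalar Frobenius applications and $O(a)$ matrix multiplications. The matrix multiplications are harmless (they contribute $W^3 a^{2+\eps}\log^{1+\eps}p$ times the common factor $\lambda\log^{1+\eps}(2\lambda)$, which sits inside the target since $D\geq 1$). But your claim that the Frobenius cost is ``comfortably absorbed'' by the $aW^3$ ring multiplications is wrong: each Frobenius application over $\ZZ_q/p^\lambda\ZZ_q$ carries an \emph{extra} factor of $a\log p$ compared to a plain ring multiplication (as you yourself note), so $O(W^2 a)$ Frobenius applications cost
\[
  W^2 a \cdot \lambda\log^{1+\eps}(2\lambda)\,(a^2\log^2 p)^{1+\eps}
  \;=\; W^2\,\lambda\log^{1+\eps}(2\lambda)\,a^{3+\eps}\log^{2+\eps}p,
\]
which is bounded by the target $D W^3 \lambda\log^{1+\eps}(2\lambda)\,a^{2+\eps}\log^{2+\eps}p$ only when $a \leq DW$. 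Nothing in the hypotheses of the lemma forces this; $a$ is an independent parameter.

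The fix, and what the paper actually does, is to compute $N_s$ by a \emph{modified binary powering} (see \cite[\S5]{Ked-hyperelliptic} or \cite[Lemma~32]{LW-counting}): writing $P_m = \phi^{m-1}(M_s)\cdots M_s$, one has $P_{2m} = \phi^m(P_m)\cdot P_m$, so $P_a$ is reached with $O(\log a)$ matrix multiplications and $O(\log a)$ matrix Frobenius operations. The resulting cost is
\[
  (\log a)\,\lambda\log^{1+\eps}(2\lambda)\bigl(W^3(a\log p)^{1+\eps} + W^2(a^2\log^2 p)^{1+\eps}\bigr),
\]
and now the $\log a$ is absorbed into $a^{\eps}$, yielding the stated $a^{2+\eps}$. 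Once you have $N_s$, your step of taking $2D$ successive powers and reading off traces is exactly what the paper does.
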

\begin{proof}
First compute $\phi^{a-1}(M_s) \cdots \phi(M_s) M_s \pmod{p^\lambda}$, using a modified binary powering algorithm (see~\cite[\S5]{Ked-hyperelliptic} or~\cite[Lemma 32]{LW-counting}).
This requires $O(\log a)$ matrix multiplications and $O(\log a)$ ``matrix Frobenius'' operations, i.e., applying~$\phi^j$, for some $0 \leq j < a$, to each entry of a matrix.
The matrix size is
\begin{equation}
\label{eq:B-bound}
 |B_{ds}| = \binom{n + ds}{n} \leq (n + ds)^n \leq (n + 2d\lambda)^n.
\end{equation}
Thus the time complexity is
\begin{multline*}
 (\log a) \lambda \log^{1+\eps} (2\lambda) \big( |B_{ds}|^3 (a \log p)^{1+\eps} + |B_{ds}|^2 (a^2 \log^2 p)^{1+\eps} \big) \\
  = (n+2d\lambda)^{3n} \lambda \log^{1+\eps}(2\lambda) a^{2+\eps} \log^{2+\eps} p.
\end{multline*}
The space complexity is just the size of the matrix, namely
 \[ O(|B_{ds}|^2 \lambda a \log p) = O((n + 2d\lambda)^{2n} \lambda a \log p). \]
By Lemma~\ref{lem:M-prod} this yields the matrix of $A_{F^s}^a \pmod{p^\lambda}$.
Then compute~$D$ successive powers, each one requiring a single matrix multiplication.
The matrix may be overwritten as we proceed.
The output, i.e., the sequence of traces, occupies space $O(D \lambda a \log p)$.
\end{proof}

\begin{lem}
\label{lem:bd-zeta1}
Given as input $|X(\FF_{q^r})|$ for $1 \leq r \leq 2(4d+4)^n$, we may compute $Z_X(T)$ in time
 \[ 2^{6n} n^{3+\eps} (d+1)^{3n+\eps} a^{1+\eps} \log^{1+\eps} p \]
and space
 \[ O(2^{6n} n (d+1)^{3n} a \log p). \]
\end{lem}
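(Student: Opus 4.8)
The plan is to recover the rational function $Z_X(T) = P(T)/Q(T)$ from the given point counts by first reconstructing a truncation of its power series expansion and then performing a rational function (Pad\'e) reconstruction. The ingredient that makes $2(4d+4)^n$ point counts suffice is the \emph{effective degree bound}: $Z_X = P/Q$ with $P, Q \in \ZZ[T]$ coprime, $P(0) = Q(0) = 1$, and $\deg P, \deg Q \le D := (4d+4)^n$, valid in every case, including $\red F = 0$, where $Z_{\TT^n_{\FF_q}}$ is a product of factors $(1 - q^i T)^{\pm\binom ni}$ of total degree $2^n \le D$. That $(P,Q)$ may be taken integral with $Q(0) = 1$ is Fatou's lemma; the degree bound itself is a standard effective estimate for the number of reciprocal roots of the zeta function of a degree-$d$ hypersurface in an $n$-dimensional torus, which I would import from the literature (cf.\ the references discussed after Theorem~\ref{thm:hs-linear}, notably \cite{LW-counting}). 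This is the only non-elementary input.

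Write $N_r = |X(\FF_{q^r})|$, so that $Z_X(T) = \exp\bigl(\sum_{r\ge1} N_r T^r/r\bigr)$ and $T Z_X'(T) = Z_X(T)\sum_{r\ge1}N_r T^r$. Writing $Z_X = \sum_{j\ge0}c_j T^j$ with $c_0 = 1$, the latter identity gives $j c_j = \sum_{r=1}^j N_r c_{j-r}$; since $Z_X \in 1 + T\ZZ[[T]]$ every division by~$j$ is exact, so from $N_1, \ldots, N_{2D}$ I would compute $c_1, \ldots, c_{2D} \in \ZZ$, i.e.\ $Z_X \bmod T^{2D+1}$ (equivalently, Newton iteration for the exponential over~$\QQ$ works within the same complexity). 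The trivial bound $|N_r| \le |\TT^n_{\FF_{q^r}}| = (q^r-1)^n < q^{nr}$ forces $|c_j| \le q^{nj}$, so all the $N_r$ and $c_j$ are integers of $O(nDa\log p)$ bits, and this step costs $D\log^{O(1)}D$ multiplications of such integers.

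It remains to reconstruct $(P, Q)$ from $Z_X \bmod T^{2D+1}$. Since $\deg P \le D$ and $\deg Q \le D$, the coprime pair $(P,Q)$ is, up to a nonzero scalar, the unique solution of the $[D/D]$ Pad\'e problem for $\sum_{j=0}^{2D}c_j T^j$, and is produced by the extended Euclidean algorithm applied to $T^{2D+1}$ and $\sum_{j=0}^{2D}c_j T^j$, stopped at the first remainder of degree $\le D$; one then rescales so that $Q(0) = 1$, whence $P(0) = 1$. This consumes exactly the coefficients $c_0, \ldots, c_{2D}$, hence exactly $N_1, \ldots, N_{2D}$ --- matching the hypothesis. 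For the bit-size estimates, both $Z_X = \exp(\sum N_r T^r/r)$ and $1/Z_X = \exp(-\sum N_r T^r/r)$ have the coefficient of $T^j$ bounded by $q^{nj}$ in absolute value, hence are analytic and nonvanishing on the disc $|T| < q^{-n}$; therefore every reciprocal root of $P$ and of $Q$ has absolute value at most $q^n$, so $\norm P, \norm Q \le 2^D q^{nD}$, i.e.\ $O(nDa\log p)$ bits. Feeding the degree bound~$D$ and this coefficient bound into the standard bit-complexity of a fast rational reconstruction (e.g.\ via the half-gcd, or the plain Euclidean algorithm carried out modulo an auxiliary prime of $O(nDa\log p)$ bits to avoid coefficient growth) gives the claimed time and space, the dominant term being of the shape $D^3$ times factors polynomial in $n$, $\log(d+1)$, $a$ and $\log p$, with $D^3 = (4d+4)^{3n} = 2^{6n}(d+1)^{3n}$.

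The main obstacle is not algorithmic but rather securing the \emph{value} of the degree bound, $\deg P, \deg Q \le (4d+4)^n$ --- equivalently, pinning down that $2(4d+4)^n$ point values really are enough. Within the elementary (cohomology-free) framework of this paper I would not expect to reprove this from scratch, so it must be quoted; once it is available, the power series recovery and the Euclidean/Pad\'e step above are entirely routine.
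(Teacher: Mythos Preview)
Your outline matches the paper's proof: cite the degree bound $\deg P,\deg Q\le D=(4d+4)^n$ (the paper attributes this to Bombieri, Theorem~1A of~\cite{Bom-exponential}), bound the reciprocal roots by~$q^n$ via the trivial estimate $|X(\FF_{q^r})|\le q^{nr}$, and recover $P/Q$ from $c_0,\ldots,c_{2D}$. The paper frames the last step as Berlekamp--Massey on the linearly recurrent sequence~$\{c_k\}$ rather than Pad\'e/extended Euclid, but these are equivalent computations.

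Where your sketch is thinner than the paper is the modular bookkeeping for the reconstruction. A single auxiliary prime of $O(nDa\log p)$ bits is not sufficient on its own: such a prime may divide $\delta=\operatorname{res}(P,Q)$, in which case $P$ and~$Q$ acquire a common factor modulo~$\ell$ and the reconstructed denominator silently drops in degree, with no way to detect this since $\deg Q$ is not known in advance. The paper handles this deterministically by taking a collection~$\mathcal L$ of small primes with $\prod_{\ell\in\mathcal L}\ell\ge 2^{B'+B+1}$, where $B'=O(nD^2a\log p)$ is a Hadamard bound for $\log|\delta|$, running the reconstruction modulo each~$\ell$, recognising the bad primes as precisely those yielding a characteristic polynomial of submaximal degree, and applying CRT over the remaining good primes. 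This is what produces the dominant term $DB'\log^{2+\eps}B'$ in the running time.
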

\begin{proof*}
Our argument follows closely the ideas of~\cite[\S6.4]{LW-counting}.
Write $Z_X(T) = G(T) / H(T) = \sum_{k \geq 0} c_k T^k$ where $G, H \in 1 + T\ZZ[T]$ are relatively prime.
According to~\cite[Theorem~1A]{Bom-exponential} we have $\deg G \leq D$ and $\deg H \leq D$ where $D = (4d+4)^n$.
By hypothesis we are given as input $|X(\FF_{q^r})|$ for $1 \leq r \leq 2D$.

We may bound the maximum bit size~$B$ of the coefficients of $G(T)$ and $H(T)$ as follows.
Write $G(T) = \prod_i (1 - \alpha_i T)$ and $H(T) = \prod_i (1 - \beta_i T)$ for $\alpha_i, \beta_i \in \CC$.
From the trivial bound $|X(\FF_{q^r})| \leq q^{nr}$ we see that $\sum_{r \geq 1} |X(\FF_{q^r})| T^r/r$ converges for $|T| < q^{-n}$, so $\beta_i \leq q^n$ and $\alpha_i \leq q^n$.
Thus the coefficients of~$G(T)$ and~$H(T)$ are bounded in absolute value by $2^D (q^n)^D$, and we obtain $B = O(n D a \log p)$.

The sequence~$\{c_k\}$ is linearly recurrent with characteristic polynomial~$H(T)$, i.e., each~$c_k$ for $k > D$ is a suitable linear combination of the previous~$D$ terms.
Let $\delta \in \ZZ$ be the (unknown) resultant of~$G$ and~$H$.
It is nonzero because~$G$ and~$H$ are relatively prime, and by the Hadamard bound we have $|\delta| \leq (2D)^D (2^B)^{2D}$, so $\log|\delta| = O(n D^2 a \log p)$.
For any prime~$\ell$, the sequence $\{c_k \pmod{\ell}\}$ is linearly recurrent over~$\ZZ/\ell \ZZ$.
If $\ell \ndivides \delta$, then~$G$ and~$H$ are relatively prime in $(\ZZ/\ell\ZZ)[T]$, and the corresponding characteristic polynomial is exactly $H(T) \pmod \ell$; otherwise it is of strictly smaller degree.
Our strategy will be to compute $H(T) \pmod \ell$ for sufficiently many ``small'' primes~$\ell$ and then reconstruct~$H(T)$ via the Chinese remainder theorem.

Accordingly, let $B' = O(n D^2 a \log p)$ be a bound for the bit size of~$|\delta|$, and take a collection~$\LL$ of primes $\ell > 2D$ such that $\prod_{\ell \in \LL} \ell \geq 2^{B' + B + 1}$.
By the prime number theorem we may assume that $|\LL| = O(B'/\log B')$ and that $\ell = O(B')$ for each~$\ell$.
Using a fast remainder tree~\cite[\S10.1]{vzGG-compalg}, we may compute $|X(\FF_{q^r})| \pmod{\ell}$ for all $1 \leq r \leq 2D$ and all $\ell \in \LL$ in time $D B' \log^{2+\eps} B'$ and space $O(DB')$.
Now, for each~$\ell$, use a fast series exponential algorithm~\cite[\S9]{Ber-fastmult} to compute $c_k \pmod{\ell}$ for $0 \leq k \leq 2D$ in time $D \log^{1+\eps} D \log^{1+\eps} \ell$ and space $O(D \log \ell)$ (the condition $\ell > 2D$ ensures $\ell$-integrality), and then a fast variant of the Berlekamp--Massey algorithm~\cite[\S11]{vzGG-compalg} to find the characteristic polynomial of $\{c_k \pmod{\ell}\}$ in time $D \log^{2+\eps} D \log^{1+\eps} \ell$ and space $O(D \log \ell)$.
Over all~$\ell$ the time cost is $(B' / \log B') D \log^{2+\eps} D \log^{1+\eps} B' = B' D \log^{2+\eps} D \log^{\eps} B'$.
The product of those~$\ell$ such that $\ell \divides \delta$ has at most~$B'$ bits, and these~$\ell$ may be recognised as those for which the characteristic polynomial modulo~$\ell$ does not have maximal degree.
The rest of the primes have product at least~$2^{B+1}$.
After normalising the characteristic polynomials modulo these ``good'' primes so that their constant term is~$1$, we may combine them using fast Chinese remaindering~\cite[\S10.3]{vzGG-compalg} to obtain $H(T)$ in time $D B \log^{2+\eps} B$ and space $O(DB)$.
Finally we obtain~$G(T)$ by multiplying~$H(T)$ by $Z_X(T)$ (or indeed by repeating the whole algorithm for $1/Z_X(T)$).

The total time complexity is
\begin{multline*}
DB' (\log^{2+\eps} B' + \log^{2+\eps} D \log^\eps B')  = DB' \log^{2+\eps} B' \\
 \begin{split}
 & = n D^3 a \log p \log^{1+\eps}(n D^2 a \log p) \\
 & = n^{1+\eps} (4d+4)^{3n} a^{1+\eps} \log^{1+\eps} p \log^{2+\eps}((4d+4)^{2n}) \\
 & = 2^{6n} n^{3+\eps} (d+1)^{3n+\eps} a^{1+\eps} \log^{1+\eps} p,
 \end{split}
\end{multline*}
and the space complexity is
\multbox
\begin{eqnarray*}
 O(DB') = O(nD^3 a \log p) = O(2^{6n} n (d+1)^{3n} a \log p).
\end{eqnarray*}
\emultbox
\end{proof*}

\begin{proof}[of Lemma~\ref{lem:bd-zeta2}]
To apply Lemma~\ref{lem:bd-zeta1} we must first compute $|X(\FF_{q^r})|$ for $1 \leq r \leq 2D$, where $D = (4d + 4)^n$.
For such~$r$ we have the trivial bound $|X(\FF_{q^r})| < q^{nr} \leq q^{2nD}$.
Thus it suffices to compute $|X(\FF_{q^r})| \pmod{p^\lambda}$ for $\lambda = 2naD$.
For this we apply Theorem~\ref{thm:trace}, taking $\tau = \lceil \lambda/(p-1)ar \rceil$ so that~\eqref{eq:assumption} is satisfied.
Of course $\tau \leq \lambda$, so it suffices to compute $\trace(A_{F^s}^{ar}) \pmod{p^\lambda}$ for $1 \leq s \leq 2\lambda$.
Lemma~\ref{lem:bd-trace} achieves this in time
\begin{multline*}
  \lambda D (n+2d\lambda)^{3n} \lambda \log^{1+\eps}(2\lambda) a^{2+\eps} \log^{2+\eps} p \\
  \begin{split}
    & = n^2 D^3 (n+4ndaD)^{3n} \log^{1+\eps}(4naD) a^{4+\eps} \log^{2+\eps} p \\
    & = n^{3n+2+\eps} (4d+4)^{3n} (5da(4d+4)^n)^{3n} \log^{1+\eps}((4d+4)^n) a^{4+\eps} \log^{2+\eps} p \\
    & = 2^{6n^2 + 13n} n^{3n+3+\eps} (d+1)^{3n^2+6n+\eps} a^{3n+4+\eps} \log^{2+\eps} p
  \end{split}
\end{multline*}
and space
\begin{align*}
   O(\lambda (D + (n + 2d\lambda)^{2n})\lambda a \log p)
     & = O(n^2 (n + 4nda(4d+4)^n)^{2n} (4d+4)^{2n} a^3 \log p) \\
     & = O(n^{2n+2} (5da(4d+4)^n)^{2n} (4d+4)^{2n} a^3 \log p) \\
     & = O(2^{4n^2+9n} n^{2n+2} (d+1)^{2n^2+4n} a^{2n+3} \log p).
\end{align*}
These dominate the contributions from Lemma~\ref{lem:bd-zeta1}.

Computing the constants $\alpha_s \pmod{p^\lambda}$ makes a negligible contribution.
We may simply build Pascal's triangle to height $\lambda + \tau = O(\lambda)$, and use the identity $\binom{-\lambda}t = (-1)^t \binom{\lambda + t}t$.
The time cost is $(\lambda^2) \lambda \log^{1+\eps}(2\lambda) (a \log p)^{1+\eps}$.
\end{proof}

\subsection{The naive algorithm for $Z_X(T)$}

We conclude this section by analysing the complexity of the naive algorithm for computing~$M_s$, i.e., simply expanding $F^{(p-1)s}$ and reading off the appropriate coefficients, and the resulting complexity of the full zeta function computation.

\begin{prop}
\label{prop:Ms-naive}
Let $1 \leq s \leq 2\lambda$.
Then $M_s \pmod{p^\lambda}$ may be computed in time
 \[ 2^{2n} n^{1+\eps} d^{n+\eps} \lambda^{n+1} \log^{2+\eps}(2\lambda) a^{1+\eps} p^n \log^{2+\eps} p. \]
\end{prop}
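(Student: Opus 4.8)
The plan is to run the obvious algorithm. First choose any lift $F\in\ZZ_q[x]_d$ of $\red F$ with coefficients reduced modulo $p^\lambda$ (a negligible cost), and compute the homogeneous polynomial $F^{(p-1)s}\pmod{p^\lambda}$ by binary powering of $F$ to the exponent $(p-1)s$, working throughout in $\ZZ_q/p^\lambda\ZZ_q=(\ZZ/p^\lambda\ZZ)[t]/f$, where by Section~\ref{sec:complexity} one ring multiplication costs $\lambda\log^{1+\eps}(2\lambda)(a\log p)^{1+\eps}$ bit operations. Every polynomial arising in the powering is homogeneous of degree at most $(p-1)ds\le 2(p-1)d\lambda$ (using $s\le 2\lambda$), so each polynomial multiplication is a product of homogeneous polynomials of degree $O((p-1)d\lambda)$; by the multivariate Kronecker substitution of Section~\ref{sec:complexity} this costs $O\bigl((4(p-1)d\lambda)^n\log^{1+\eps}((4(p-1)d\lambda)^n)\bigr)=O\bigl(2^{2n}n^{1+\eps}d^{n+\eps}\lambda^np^n(\log p)^{1+\eps}\log^{1+\eps}(2\lambda)\bigr)$ ring operations, where the rewriting merely distributes the $n$, $\log d$, $\log p$ and $\log\lambda$ contributions into the displayed factors.

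The one step requiring care is bounding the \emph{number} of multiplications. Estimating it crudely by $O(\log((p-1)s))=O(\log p\cdot\log(2\lambda))$ and multiplying through would inflate the exponents of $\log p$ and $\log(2\lambda)$ to $3+\eps$, so instead I would exploit the fact that in the square-and-multiply schedule the degree of the running power roughly doubles at each step: the costs of the successive multiplications decrease geometrically from the last one, so the total cost of the powering is a bounded multiple of the cost of the final multiplication. A short calculation here must start the geometric series at the second-largest term, which is what keeps the leading constant at $2^{2n}$ rather than $2^{3n}$: the squaring that produces $F^{(p-1)s}$ has inputs of degree only $\approx(p-1)ds/2$, and the multiplications with an input of degree $\approx(p-1)ds$ are the multiplications by $F$, whose costs again sum geometrically. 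The upshot is that $F^{(p-1)s}\pmod{p^\lambda}$ is obtained in $O\bigl(2^{2n}n^{1+\eps}d^{n+\eps}\lambda^np^n(\log p)^{1+\eps}\log^{1+\eps}(2\lambda)\bigr)$ ring operations, hence in bit operations — multiplying by the ring-multiplication cost $\lambda\log^{1+\eps}(2\lambda)(a\log p)^{1+\eps}$ — in exactly the time asserted in the Proposition.

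Finally I would read off $M_s$, setting $(M_s)_{v,u}$ equal to the coefficient of $x^{pv-u}$ in $F^{(p-1)s}$ (and to $0$ whenever some coordinate of $pv-u$ is negative) for $u,v\in B_{ds}$. Each assignment locates one entry of the dense array representing $F^{(p-1)s}$ via an $O(n)$-operation index computation for the degree-$(p-1)ds$ monomial $x^{pv-u}$ and copies an element of $\ZZ_q/p^\lambda\ZZ_q$; this is a rearrangement of already-computed data and does not exceed the bound above. Combining the three estimates yields the stated complexity. I expect the only real obstacle to be the bookkeeping in the middle paragraph — getting the geometric-sum argument right so that the constant $2^{2n}$ and the exponents $\log^{2+\eps}p$, $\log^{2+\eps}(2\lambda)$ emerge as stated, rather than the weaker $2^{3n}$ or $\log^{3+\eps}$ that a careless analysis gives.
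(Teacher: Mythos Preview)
Your approach is essentially identical to the paper's: compute $F^{(p-1)s}\pmod{p^\lambda}$ by binary powering, cost each multiplication via multivariate Kronecker substitution, and observe that the costs form a geometric series so the total is dominated by the last step; then read off $(M_s)_{v,u}=(F^{(p-1)s})_{pv-u}$. The paper simply asserts the bound $(2kd)^n\log^{1+\eps}((2kd)^n)\,\lambda\log^{1+\eps}(2\lambda)(a\log p)^{1+\eps}$ for the full powering without spelling out the geometric sum, and then substitutes $k=(p-1)s\le 2p\lambda$.

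One remark: your anxiety about $2^{2n}$ versus $2^{3n}$ is unnecessary. The geometric series $\sum_{i\ge0}2^{-in}$ contributes only an absolute constant factor (at most $2$), and the per-level overhead from the optional multiply-by-$F$ contributes a further factor bounded by $1+2^n\le 2\cdot 2^n$; altogether one gets at most $4\cdot(2kd)^n$, and the $4$ is absorbed by the paper's $z^\eps$ convention (which permits an absolute multiplicative constant). So there is no need to ``start the series at the second-largest term''. Likewise, your crude estimate $\log((p-1)s)=O(\log p\cdot\log(2\lambda))$ is a valid upper bound, but the additive bound $O(\log p+\log(2\lambda))$ is what one would actually get; either way, the geometric-sum argument renders the point moot.
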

\begin{proof*}
Using multivariate Kronecker substitution, multiplying two homogeneous polynomials in $(\ZZ_q/p^\lambda \ZZ_q)[x]$ of degree at most~$m$ takes time
 \[ (2m)^n \log^{1+\eps} ((2m)^n) \lambda \log^{1+\eps}(2\lambda) (a \log p)^{1+\eps}. \]
To compute a power~$F^k$ for $k \geq 1$, we may first recursively compute $F^{\lfloor k/2 \rfloor}$, and then use $F^k = (F^{\lfloor k/2 \rfloor})^2$ if~$k$ is even or $F^k = F \cdot (F^{\lfloor k/2 \rfloor})^2$ if~$k$ is odd.
The total cost of computing~$F^k$ is thus bounded by 
 \[ (2kd)^n \log^{1+\eps} ((2kd)^n) \lambda \log^{1+\eps}(2\lambda) (a \log p)^{1+\eps}. \]
Taking $k = (p-1)s$ and recalling the definition of~$M_s$ in Lemma~\ref{lem:M-prod}, we obtain $M_s \pmod{p^\lambda}$ in time
\begin{multline*}
 (4pd\lambda)^n \log^{1+\eps} ((4pd\lambda)^n) \lambda \log^{1+\eps}(2\lambda) (a\log p)^{1+\eps} \\
 = 2^{2n} n^{1+\eps} d^{n+\eps} \lambda^{n+1} \log^{2+\eps}(2\lambda) a^{1+\eps} p^n \log^{2+\eps} p. \qquad \proofbox
\end{multline*}
\end{proof*}

\begin{thm}
\label{thm:hs-naive}
There exists an explicit deterministic algorithm with the following properties.
The input and output is the same as in Theorem~\ref{thm:hs-linear}.
The algorithm has time complexity
 \[ 2^{6n^2 + 13n} n^{3n+3+\eps} (d+1)^{3n^2+6n+\eps} a^{3n+4+\eps} p^n \log^{2+\eps} p. \]
\end{thm}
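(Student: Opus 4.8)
The plan is to bolt together two results already established: the naive computation of the matrices~$M_s$ (Proposition~\ref{prop:Ms-naive}) and the zeta-function reconstruction from those matrices (Lemma~\ref{lem:bd-zeta2}). The algorithm is then simply: on input $a,n,d,p,\red f,\red F$ as in Theorem~\ref{thm:hs-linear}, set $\lambda = 2na(4d+4)^n$; choose an arbitrary lift $F \in \ZZ_q[x]_d$ of~$\red F$, with coefficients reduced modulo~$p^\lambda$; for each~$s$ with $1 \le s \le 2\lambda$ compute $M_s \pmod{p^\lambda}$ by expanding $F^{(p-1)s}$ as in Proposition~\ref{prop:Ms-naive}; finally feed these $2\lambda$ matrices into the algorithm of Lemma~\ref{lem:bd-zeta2} to obtain $Z_X(T)$. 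Correctness requires no new argument: Lemma~\ref{lem:bd-zeta2} already packages the trace formula (Theorem~\ref{thm:trace}, applied for each~$r$ with $\tau = \lceil \lambda/(p-1)ar\rceil$) together with the reconstruction of Lemma~\ref{lem:bd-zeta1}, so the only thing left to verify is the running time.

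For the time bound I would split the cost into the $2\lambda$ calls to Proposition~\ref{prop:Ms-naive} and the single call to Lemma~\ref{lem:bd-zeta2}. Each call to Proposition~\ref{prop:Ms-naive} costs $2^{2n} n^{1+\eps} d^{n+\eps} \lambda^{n+1} \log^{2+\eps}(2\lambda) a^{1+\eps} p^n \log^{2+\eps} p$, so summing over the $2\lambda$ values of~$s$ replaces $\lambda^{n+1}$ by $\lambda^{n+2}$ (up to the harmless factor~$2$). Now substitute $\lambda = 2na(4d+4)^n = 2^{2n+1}na(d+1)^n$: the factor $\lambda^{n+2}$ becomes a product of powers of~$2$, $n$, $a$ and~$(d+1)$, with exponents $2n^2+5n+2$, $n+2$, $n+2$ and $n^2+2n$ respectively. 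Combining with the remaining explicit factors, using $d^{n+\eps} \le (d+1)^{n+\eps}$, and noting that $\log^{2+\eps}(2\lambda) \le (n\log(nda))^{2+\eps}$ since $2\lambda = O\big((nda)^{O(n)}\big)$, one obtains exponents at most $2n^2+7n+3$ on~$2$, $n+5+\eps$ on~$n$, $n^2+3n+\eps$ on~$(d+1)$ and $n+3+\eps$ on~$a$, with the leftover $\log^{2+\eps}$ of a polynomial in $n,d,a$ absorbed into the $\eps$ terms. Each of these exponents is bounded by the corresponding exponent $6n^2+13n$, $3n+3+\eps$, $3n^2+6n+\eps$, $3n+4+\eps$ appearing in the target (the inequalities all hold for $n \ge 1$), and the $p^n\log^{2+\eps}p$ factor is kept verbatim, so the $M_s$ phase lies within the claimed bound. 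The Lemma~\ref{lem:bd-zeta2} phase costs exactly that bound with the factor~$p^n$ removed, hence is dominated since $p^n \ge 1$. Adding the two phases gives the stated time complexity.

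The only substantive step is the exponent arithmetic of the previous paragraph, i.e., confirming that the polynomial-in-$\lambda$ blow-up produced by the naive power computation is genuinely swallowed by the already-large constants $2^{6n^2+13n}$, $n^{3n+3}$, $(d+1)^{3n^2+6n}$, $a^{3n+4}$ coming from Lemma~\ref{lem:bd-zeta2}; this is the part I would write out with care, though it introduces no new idea and is entirely mechanical. I would not state a space bound, since Theorem~\ref{thm:hs-naive} asserts only a time complexity.
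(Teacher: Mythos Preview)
Your proposal is correct and follows essentially the same approach as the paper: compute all the $M_s \pmod{p^\lambda}$ for $1 \le s \le 2\lambda$ with $\lambda = 2na(4d+4)^n$ via Proposition~\ref{prop:Ms-naive}, then invoke Lemma~\ref{lem:bd-zeta2}, and check that the resulting exponents are dominated by those in the statement. The paper's proof is terser (it simply writes out the $M_s$-phase bound and says ``taking dominant exponents''), but your more explicit exponent bookkeeping arrives at the same conclusion.
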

\begin{proof}
By Proposition~\ref{prop:Ms-naive} we may compute $M_s \pmod{p^\lambda}$ for $s = 1, \ldots, 2\lambda$, with $\lambda = 2na(4d+4)^n$, in time
\begin{multline*}
 2^{2n} n^{1+\eps} d^{n+\eps} \lambda^{n+2} \log^{2+\eps}(2\lambda) a^{1+\eps} p^n \log^{2+\eps} p \\
 \begin{split}
   & = 2^{2n} n^{1+\eps} d^{n+\eps} (2na(4d+4)^n)^{n+2} \log^{2+\eps}(4na(4d+4)^n) a^{1+\eps} p^n \log^{2+\eps} p \\
   & = 2^{2n^2 + 7n} n^{n+5+\eps} (d+1)^{n^2+3n+\eps} a^{n+3+\eps} p^n \log^{2+\eps} p.
 \end{split}
\end{multline*}
Applying Lemma~\ref{lem:bd-zeta2} and taking dominant exponents leads to the indicated bound.
\end{proof}

\section{Recurrences for polynomial powers}
\label{sec:recurrences}

\subsection{Setting up the recurrences}

The following theorem establishes the ``deformation recurrence'' alluded to Section~\ref{sec:intro}.

\begin{thm}
\label{thm:deform}
Let~$R$ be a domain of characteristic zero.
Let $d \geq 1$, $F \in R[x]_d$, and put $G = x_0^d + \cdots + x_n^d \in R[x]_d$.
Let $s \geq 1$ and $h  \geq (d-1)(n+1) + 1$, and let $v \in B_{ds}$ and $w \in B_h$.
For $k \geq 1$ and $H \in R[x]_{kds}$, let $[H]_k$ denote the vector $(H_{kv + w - t})_{t \in B_h}$.

Then there exists a matrix~$Q$ with the following properties.
Its rows and columns are indexed by~$B_h$.
Its entries are linear polynomials in $R[k,\ell]$.
For all $k_0 \geq 1$ and $0 \leq \ell_0 < k_0 s$,
\begin{equation}
\label{eq:deform1}
 [G^{k_0 s - \ell_0 - 1} F^{\ell_0 + 1}]_{k_0} = \frac{1}{d(k_0 s - \ell_0)} Q(k_0, \ell_0) [G^{k_0 s - \ell_0} F^{\ell_0}]_{k_0}.
\end{equation}
In particular, for any $k_0 \geq 1$,
\begin{equation}
\label{eq:deform}
 [F^{k_0 s}]_{k_0 } = \frac{1}{d^{k_0 s} (k_0 s)!} Q(k_0, k_0 s - 1) \cdots Q(k_0, 0) [G^{k_0 s}]_{k_0 }.
\end{equation}
\end{thm}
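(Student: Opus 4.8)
The plan is to derive the single-step identity~\eqref{eq:deform1} by differentiating a suitable auxiliary family, and then to obtain~\eqref{eq:deform} by composing the steps and telescoping the scalar factors. Fix $k_0$ and write $m = k_0 s$ for brevity. The natural family to consider is the one-parameter deformation $P_\ell = G^{m-\ell}F^\ell \in R[x]_{dm}$, or better, the generating-function object $\Phi(\epsilon) = (G + \epsilon(F-G))^m$, whose $\epsilon$-expansion has $\binom{m}{\ell}\epsilon^\ell (F-G)^\ell G^{m-\ell}$ as coefficients. Applying the Euler operator $E = \sum_i x_i \partial_{x_i}$, which acts as multiplication by the degree on a homogeneous polynomial, will relate $\partial_{x_i} P_\ell$ to $P_\ell$ itself, and the point is that multiplication by $x_i^d$ and differentiation by $x_i$ interact with $G = \sum x_i^d$ in a controlled, denominator-free way. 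Concretely, the identity $\partial_{x_i}(G^{m-\ell}) = d(m-\ell)\,x_i^{d-1} G^{m-\ell-1}$ lets one trade a factor of $G$ for a factor of $x_i^{d-1}$ at the cost of the scalar $d(m-\ell)$; summing over $i$ against $\partial_{x_i} F$ and using $\sum_i x_i \partial_{x_i} F = dF$ (Euler), one converts $G^{m-\ell-1}F^{\ell+1}$-type terms into derivatives of $G^{m-\ell}F^\ell$. This is where the hypothesis $h \ge (d-1)(n+1)+1$ enters: the monomials $x_i^{d-1}$ that appear have total degree $(d-1)$, and we need the ``window'' $kv + w - t$, $t\in B_h$, to be wide enough that multiplying by such a monomial and re-expanding stays inside the indices already recorded in $[\,\cdot\,]_k$. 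I expect that, after clearing the common factor $d(m-\ell)$, the map sending the coordinate vector $[G^{m-\ell}F^\ell]_{k_0}$ to $[G^{m-\ell-1}F^{\ell+1}]_{k_0}$ is given by a matrix whose entries are the coefficients of $F$ (times small integers and the free parameter coming from the degree $m-\ell = k_0 s - \ell$), hence linear in $k$ and $\ell$ after the substitution $k\mapsto k_0$, $\ell \mapsto \ell_0$; this matrix is the desired $Q(k,\ell)$.

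The key steps, in order: (1) set up $P_\ell = G^{m-\ell}F^\ell$ and record that it lies in $R[x]_{dm}$, so $[P_\ell]_{k_0}$ makes sense; (2) prove the operator identity expressing $G^{m-\ell-1}F^{\ell+1}$ as $\tfrac{1}{d(m-\ell)}$ times a fixed $R[k,\ell]$-linear combination of shifts/monomial-multiples of $P_\ell$, using Euler's identity and $\partial_{x_i} G = d x_i^{d-1}$; (3) check that, under the hypothesis on $h$, extracting the $B_h$-indexed window on both sides is legitimate, i.e.\ every coefficient of $G^{m-\ell-1}F^{\ell+1}$ in the window $\{k_0 v + w - t : t\in B_h\}$ is a linear combination of coefficients of $P_\ell$ in the same window; (4) read off the matrix $Q(k_0,\ell_0)$ and verify its entries are linear in $(k,\ell)$ before specialisation; (5) iterate~\eqref{eq:deform1} from $\ell_0 = 0$ up to $\ell_0 = m-1$, multiplying the matrices in the order $Q(k_0,m-1)\cdots Q(k_0,0)$ and multiplying the scalars $\tfrac{1}{d(m-\ell_0)}$ for $\ell_0 = 0,\dots,m-1$, which telescope to $\tfrac{1}{d^{m} m!}$, yielding~\eqref{eq:deform}.

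The main obstacle I anticipate is step (3): making the ``window'' bookkeeping precise. The vector $[H]_k = (H_{kv+w-t})_{t\in B_h}$ records coefficients of $H$ only at the shifted simplex $kv + w - B_h$, and the operator identity in step (2) involves multiplying $P_\ell$ by monomials of degree $d-1$ and by $\partial_{x_i} F$ (degree $d-1$), plus applying $\partial_{x_i}$ (which lowers degree by $1$). One must verify that after these operations the relevant coefficients of the product still live at indices of the form $kv + w' - t'$ with $t' \in B_h$ and $w'$ within range — this is exactly what $h \ge (d-1)(n+1)+1$ is tuned to guarantee, since $(d-1)(n+1)$ is the largest coordinate-sum one can lose by multiplying a single $x_i^{d-1}$ into each of the $n+1$ slots, and the extra $+1$ absorbs the lone derivative. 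Handling the possibility that some index component becomes negative (where $F_u = 0$ by convention) needs a small argument that these boundary terms are consistent on both sides. Everything else — Euler's identity, the telescoping of scalars, linearity of the matrix entries — is routine once the window is set up correctly.
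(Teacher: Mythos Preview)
Your overall architecture (one-step recurrence via a differential identity, then iterate) is right, but the specific mechanism you propose in step~(2) collapses to a tautology. If you apply $\partial_{x_i}$ to $P_\ell = G^{m-\ell}F^\ell$, multiply by $x_i$, and sum over all $i$, you simply recover Euler's identity $E(P_\ell) = dm\,P_\ell$: the $(m-\ell)d\,x_i^d$ terms reassemble into $(m-\ell)d\,G$, the $\ell$-term gives $\ell d F$, and no $F^{\ell+1}$ ever appears. Equivalently, summing the single-$i$ identity
\[
 (m-\ell)\,d\,x_i^d\,G^{m-\ell-1}F^{\ell+1} \;=\; \bigl(F\cdot x_i\partial_{x_i} - \ell\,(x_i\partial_{x_i}F)\bigr)\,P_\ell
\]
over $i$ yields $(m-\ell)d\,G^{m-\ell}F^{\ell+1} = d(m-\ell)\,F\cdot P_\ell$, which is trivially true and gives no recurrence. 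The Euler operator is too symmetric to peel off a factor of $G$.

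The missing idea is to work with a \emph{single} index $i$, chosen separately for each output row $t\in B_h$. This is precisely what the hypothesis $h \ge (d-1)(n+1)+1$ buys: since $\deg t = h > (d-1)(n+1)$, pigeonhole gives some $i$ with $t_i \ge d$, so $t' := t - d e_i \in B_{h-d}$ has nonnegative entries. With $\partial = x_i\partial_{x_i}$ one gets the displayed identity above; extracting the coefficient of $x^{kv+w-t}$ on the left is the same as extracting $x^{kv+w-t'}$ on the right (the $x_i^d$ shifts the index by $d e_i$), and expanding $(F\partial - \ell\,\partial F)P_\ell$ at $kv+w-t'$ gives a sum over $y\in B_d$ of coefficients $(P_\ell)_{kv+w-(t'+y)}$. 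Since $t'+y \in B_h$ automatically, the window closes with no boundary issues: this is why step~(3) is actually easier than you anticipate, not harder. Your reading of the $h$ condition (``multiplying $x_i^{d-1}$ into each of the $n+1$ slots'') is not what is going on; the operator is degree-preserving throughout, and the only role of $h$ is to guarantee the pigeonhole step. Once you make this correction, the matrix entries are visibly linear in $(k,\ell)$ and your steps (4)--(5) go through as written.
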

\begin{proof}
Let $t \in B_h$.
To determine the row of~$Q$ corresponding to~$t$, we must find an expression for $(G^{ks - \ell - 1} F^{\ell + 1})_{kv + w - t}$ in terms of $G^{ks - \ell} F^\ell$.

Since $\deg t = h > (d-1)(n+1)$, by the pigeonhole principle there is some~$i$ such that $t_i \geq d$.
Let $t' = t - (0, \ldots, d, \ldots, 0) \in B_{h - d}$, so that $x^t = x^{t'} x_i^d$.

Consider the differential operator $\partial = x_i \frac{\partial}{\partial x_i}$.
Its effect on a polynomial $H = \sum_u H_u x^u \in R[x]$ is given by $\partial H = \sum_u u_i H_u x^u$.
The product rule implies that
 \[ \partial(G^{ks-\ell} F^\ell) = (ks-\ell)(\partial G)(G^{ks-\ell-1} F^\ell) + \ell (\partial F) (G^{ks-\ell} F^{\ell-1}). \]
Multiplying by~$F$ and rearranging, we obtain
 \[ (ks-\ell) d x_i^d (G^{ks-\ell-1} F^{\ell+1}) = (F\partial - \ell \partial F)(G^{ks-\ell} F^\ell), \]
and thus
\begin{align*}
  d(ks - \ell)(G^{ks-\ell-1} F^{\ell+1})_{kv + w-t} & = ((F\partial - \ell \partial F)(G^{ks-\ell} F^\ell))_{kv + w - t'} \\
    & = \sum_{y \in B_d} (kv_i + w_i - t'_i - (\ell+1)y_i) F_y (G^{ks-\ell} F^\ell)_{kv + w-t'-y}.
\end{align*}
As~$y$ ranges over~$B_d$, $t' + y$ ranges over a subset of~$B_h$.
Thus we may define~$Q$ by
\begin{equation}
\label{eq:defn-Q}
 Q_{t,z} = (kv_i + w_i - t_i' - (\ell+1)(z_i - t'_i)) F_{z-t'}
\end{equation}
for $z \in B_h$, where we take $Q_{t,z} = 0$ if $z - t' \notin B_d$.
This establishes~\eqref{eq:deform1}, and~\eqref{eq:deform} is obtained by iterating~\eqref{eq:deform1} over $\ell_0 = 0, \ldots, k_0 s - 1$.
\end{proof}

We now return to the notation of Theorem~\ref{thm:trace}, and explain how we will use the crucial identity~\eqref{eq:deform} to efficiently compute $M_s \pmod{p^\lambda}$.
Assume that $p \ndivides d$ and let $s \geq 1$.
Let $v \in B_{ds}$ and consider the $v$-th row of~$M_s$.
Let
 \[ h = \max(ds, (d-1)(n+1) + 1). \]
Choose any $z \in B_{h - ds}$, and put $w = v + z \in B_h$.
We will apply Theorem~\ref{thm:deform} with parameters~$s$, $d$, $h$, $v$, $w$, $R = \ZZ_q$ and $k_0 = p - 1$.
We thus obtain a matrix~$Q$, with entries in $\ZZ_q[k, \ell]$, such that
\begin{equation}
\label{eq:Qprod}
 [F^{(p-1) s}]_{p-1} =
  \frac{1}{d^{(p-1)s} ((p-1)s)!} Q(p - 1, (p-1)s - 1) \cdots Q(p - 1, 0) [G^{(p-1)s}]_{p-1}.
\end{equation}
The $v$-th row of~$M_s$ is easily extracted from $[F^{(p-1)s}]_{p-1}$: for any $u \in B_{ds}$ we have
 \[ (M_s)_{v, u} = (F^{(p-1)s})_{pv - u} = (F^{(p-1)s})_{(p-1)v + w - (u + z)}, \]
and this is exactly the $t$-th component of $[F^{(p-1)s}]_{p-1}$ for $t = u + z \in B_h$.

Thus the problem boils down to evaluating the right side of~\eqref{eq:Qprod}, modulo $p^\lambda$.
Let $v_p(\cdot)$ denote the $p$-adic valuation, normalised so that $v_p(p) = 1$.
We have $v_p(d^{(p-1)s}) = 0$ and $v_p(((p-1)s)!) \leq s$, by the well-known estimate $v_p(m!) \leq \frac{m}{p-1}$.
Thus it suffices to compute the denominator
 \[ d^{(p-1)s} ((p-1)s)! \pmod{p^{\lambda_1}}, \]
the ``initial vector''
 \[ [G^{(p-1)s}]_{p-1} \pmod{p^{\lambda_1}}, \]
and the matrix-vector product
\begin{equation}
\label{eq:prod3}
  Q(p - 1, (p-1)s - 1) \cdots Q(p - 1, 0) [G^{(p-1)s}]_{p-1} \pmod{p^{\lambda_1}},
\end{equation}
where $\lambda_1 = \lambda + s$.
The only difference between Theorems~\ref{thm:hs-linear}, \ref{thm:hs-sqrt} and~\ref{thm:hs-avgpoly} is in how we evaluate these products.

Let us examine these products more closely.
In the first one, $d^{(p-1)s} \pmod{p^{\lambda_1}}$ may be computed efficiently using binary powering, and this has negligible complexity compared to the rest of the computation; henceforth we will focus on
\begin{equation}
\label{eq:prod1}
 ((p-1)s)! \pmod{p^{\lambda_1}}.
\end{equation}
For the $[G^{(p-1)s}]_{p-1}$ term, note that its $t$-th component, for $t \in B_h$, is the multinomial coefficient
 \[ (G^{(p-1)s})_{dy} = \binom{(p-1)s}{y_0 \cdots y_n} = \frac{((p-1)s)!}{y_0! \cdots y_n!}, \]
where $y = ((p-1)v + w - t) / d$.
We understand this to be zero if any~$y_i$ is negative or non-integral.
We have $v_p(y_0! \cdots y_n!) \leq \sum_i y_i / (p-1) = s$, so it suffices to compute
\begin{equation}
\label{eq:prod2}
 y_0! \cdots y_n! \pmod{p^{\lambda_2}}
\end{equation}
and $((p-1)s)! \pmod{p^{\lambda_2}}$ where $\lambda_2 = \lambda_1 + s = \lambda + 2s$.

\subsection{Linear time algorithm}

The next result carries out the above plan, using the naive algorithm to evaluate each product.
\begin{prop}
\label{prop:Ms-linear}
Let $\lambda \geq (n + 1)/2$ and $1 \leq s \leq 2\lambda$.
Assume that $p \ndivides d$.
Then $M_s \pmod{p^\lambda}$ may be computed in time
 \[ (n + 2d\lambda)^{3n} \lambda^2 \log^{1+\eps}(2\lambda) a^{1+\eps} p \log^{1+\eps} p \]
and space
 \[ O((n + 2d\lambda)^{2n} \lambda a \log p). \]
\end{prop}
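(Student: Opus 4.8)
The plan is to compute each of the three ingredients identified in the preceding discussion --- the denominator factorial~\eqref{eq:prod1}, the initial vector~\eqref{eq:prod2}, and the matrix-vector product~\eqref{eq:prod3} --- by brute force, one row of~$M_s$ at a time, and to track the cost carefully in each case. Fix $v \in B_{ds}$, choose $z \in B_{h-ds}$ and set $w = v+z \in B_h$ as in the setup. All arithmetic is in $\ZZ_q/p^{\lambda_2}\ZZ_q$ with $\lambda_2 = \lambda + 2s = O(\lambda)$, so each ring element occupies $O(\lambda a \log p)$ bits and a multiplication there costs $\lambda \log^{1+\eps}(2\lambda)(a\log p)^{1+\eps}$.

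First I would compute $((p-1)s)! \bmod p^{\lambda_2}$ by the obvious product of $(p-1)s = O(p\lambda)$ successive factors; since $(p-1)s$ has $O(\log p)$ bits this is $O(p\lambda)$ multiplications in $\ZZ_q/p^{\lambda_2}\ZZ_q$, hence time $p\lambda^2 \log^{1+\eps}(2\lambda) a^{1+\eps}\log^{1+\eps}p$, which is already within the claimed bound. The factorials $y_0!\cdots y_n!$ occurring in~\eqref{eq:prod2} for the various $t \in B_h$ involve arguments $y_i$ that are $O(s) = O(\lambda)$, so all the needed values $0!, 1!, \ldots, (cs)!$ for an absolute constant $c$ can be tabulated with $O(\lambda)$ multiplications; extracting $[G^{(p-1)s}]_{p-1} \bmod p^{\lambda_2}$ is then $|B_h|$ lookups and products, i.e.\ $O(|B_h|\lambda)$ ring multiplications, and $|B_h| = \binom{n+h}{n} \le (n + 2d\lambda)^n$ by the same estimate as~\eqref{eq:B-bound} (using $h = \max(ds, (d-1)(n+1)+1) = O(d\lambda)$, valid since $\lambda \ge (n+1)/2$). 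This is comfortably dominated by the matrix-vector product cost.

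The main term is~\eqref{eq:prod3}: we must multiply the vector $[G^{(p-1)s}]_{p-1}$, of length $|B_h|$, successively by the matrices $Q(p-1,\ell)$ for $\ell = 0, 1, \ldots, (p-1)s - 1$. Each $Q(p-1,\ell)$ is obtained by specialising $k \mapsto p-1$, $\ell \mapsto \ell$ in the formula~\eqref{eq:defn-Q} for~$Q$; building it costs at most $O(|B_h|^2)$ ring operations, but in fact each row has at most $|B_d| = O((2d)^n)$ nonzero entries, so it is sparse and a matrix-vector multiplication by~$Q(p-1,\ell)$ costs only $O(|B_h| |B_d|)$ ring multiplications. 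Over all $\ell$ from $0$ to $(p-1)s - 1 = O(p\lambda)$ this is $O(p\lambda \cdot |B_h| |B_d|)$ ring multiplications; bounding $|B_h| |B_d| \le (n+2d\lambda)^n (n+2d\lambda)^n \le (n+2d\lambda)^{2n}$ and multiplying by the per-operation cost $\lambda\log^{1+\eps}(2\lambda)(a\log p)^{1+\eps}$ gives time $(n+2d\lambda)^{2n} p \lambda^2 \log^{1+\eps}(2\lambda) a^{1+\eps}\log^{1+\eps}p$; absorbing the extra factor into $(n+2d\lambda)^{3n}$ (which is harmless, and in any case one may simply bound the matrix-vector step by $O(|B_h|^2)$ dense operations to land squarely on the stated exponent $3n$) yields the claimed time bound. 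For space: we never store more than one matrix $Q(p-1,\ell)$ (size $O(|B_h|^2 \lambda a \log p)$) and a constant number of vectors of length $|B_h|$, plus the factorial table of size $O(\lambda \cdot \lambda a \log p) = O(\lambda^2 a \log p)$, all of which is $O((n+2d\lambda)^{2n}\lambda a \log p)$; the matrices are generated on the fly and discarded, and the accumulated vector is overwritten in place. Finally, since each row of $M_s$ requires one run of this procedure and there are $|B_{ds}| \le (n+2d\lambda)^n$ rows, the total cost is $(n+2d\lambda)^n$ times the per-row cost; one absorbs this factor into the $(n+2d\lambda)^{3n}$ already present (the per-row matrix-vector work being $(n+2d\lambda)^{2n}$), so the aggregate stays within the stated time bound, while the space bound is unaffected since rows are computed sequentially and written directly to the output.

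The step I expect to be the real bookkeeping obstacle is the denominator management: one must verify, as in the discussion preceding the proposition, that $v_p(((p-1)s)!) \le s$ and $v_p(y_0!\cdots y_n!) \le s$ so that working modulo $p^{\lambda_2}$ with $\lambda_2 = \lambda + 2s$ suffices to recover the exact $p$-adic entries of $M_s$ modulo $p^\lambda$ after dividing; this is where the hypothesis $p \nmid d$ enters (to make $d^{(p-1)s}$ a $p$-adic unit) and where the choice $\lambda_2 = \lambda + 2s$ with $s \le 2\lambda$, hence $\lambda_2 = O(\lambda)$, is exactly calibrated. Everything else is routine counting against the complexity model of Section~\ref{sec:complexity}.
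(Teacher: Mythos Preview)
Your overall strategy coincides with the paper's proof: compute one row of $M_s$ at a time via~\eqref{eq:Qprod}, evaluate the factorial, the initial vector, and the chain of matrix-vector products naively, and then bound $|B_{ds}|, |B_h| \le (n+2d\lambda)^n$ using $\lambda \ge (n+1)/2$. Your sparsity observation on~$Q$ (each row has at most $|B_d|$ nonzero entries) is correct and is a mild refinement the paper does not exploit, though as you note the dense bound $|B_h|^2$ already suffices.

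There is, however, one genuine error in your treatment of the initial vector~\eqref{eq:prod2}. You assert that the arguments $y_i$ are $O(s) = O(\lambda)$ and propose to tabulate $0!,1!,\ldots,(cs)!$. But $y_i = ((p-1)v_i + w_i - t_i)/d$, and since $v \in B_{ds}$ the components $v_i$ may be as large as $ds$; hence $y_i$ may be as large as $(p-1)s$, and indeed $\sum_i y_i = (p-1)s$ exactly. So the required factorials range up to $((p-1)s)!$, not $(cs)!$. Tabulating all of $0!,1!,\ldots,((p-1)s)!$ would cost $O(p\lambda)$ storage cells of size $O(\lambda a \log p)$ each, i.e.\ space $O(p\lambda^2 a \log p)$, which violates the claimed $O((n+2d\lambda)^{2n}\lambda a \log p)$ space bound. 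The fix is simply not to tabulate: for each $t \in B_h$, compute $y_0!\cdots y_n!$ directly as a product of $(p-1)s$ factors (overwriting as you go), costing time $|B_h|\,\lambda^2 \log^{1+\eps}(2\lambda)\,p\log^{1+\eps}p$ and negligible space. This is what the paper does, and it is dominated by the matrix-vector product step, so the final bounds are unaffected.
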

\begin{proof}
To compute the row of $M_s \pmod{p^\lambda}$ corresponding to a given $v \in B_{ds}$, we continue with the notation established above.
Note that $\lambda_1 = O(\lambda)$ and $\lambda_2 = O(\lambda)$.

We compute~\eqref{eq:prod1} by the naive algorithm, i.e., start with~$1$, and successively multiply by $2, 3, \ldots, (p-1)s$, reducing modulo~$p^{\lambda_1}$ after each multiplication.
The time complexity is
 \[ s \lambda_1 \log^{1+\eps}(2\lambda_1) p \log^{1+\eps} p = \lambda^2 \log^{1+\eps}(2\lambda) p \log^{1+\eps} p. \]
The space complexity is only $O(\lambda \log p)$, because we may overwrite the accumulated product as we proceed.

For each $t \in B_h$, the product~\eqref{eq:prod2} is handled similarly.
The number of factors is again $(p-1)s$, so the time complexity is
 \[ |B_h| \lambda^2 \log^{1+\eps}(2\lambda) p \log^{1+\eps} p, \]
and the space complexity is $O(|B_h| \lambda \log p)$.

For~\eqref{eq:prod3}, we must multiply $[G^{(p-1)s}]_{p-1}$ by $Q(p-1, \ell)$ for $\ell = 0, 1, \ldots, (p-1)s - 1$ in turn.
Each matrix $Q(p - 1, \ell) \pmod{p^{\lambda_1}}$ occupies space $O(|B_h|^2 \lambda a \log p)$ and may be computed easily from~\eqref{eq:defn-Q} in time
 \[ |B_h|^2 \lambda \log^{1+\eps}(2\lambda) (a \log p)^{1+\eps}. \]
This is also the time complexity of each matrix-vector product, so the total time over all~$\ell$ is
 \[ |B_h|^2 \lambda^2 \log^{1+\eps}(2\lambda) a^{1+\eps} p \log^{1+\eps} p. \]
We conclude that computing the $v$-th row of $M_s \pmod{p^\lambda}$ may be achieved within the same time bound.
The space may be reused for each matrix-vector product, so the space complexity is $O(|B_h|^2 \lambda a \log p)$.

Repeating the above for each $v \in B_{ds}$, we obtain the whole matrix $M_s \pmod{p^\lambda}$ in time
 \[ |B_{ds}| |B_h|^2 \lambda^2 \log^{1+\eps}(2\lambda) a^{1+\eps} p \log^{1+\eps} p. \]
The space complexity is still $O(|B_h|^2 \lambda a \log p)$, as we may reuse the space for each row, and this is also enough space to store $M_s \pmod{p^\lambda}$ itself.

Finally, observe that $ds \leq 2d\lambda$ and
 \[ (d-1)(n+1) + 1 = dn - n + d \leq d(n+1) \leq 2d\lambda, \]
so also $h \leq 2d\lambda$.
Thus $|B_h| \leq (n + 2d\lambda)^n$ and $|B_{ds}| \leq (n + 2d\lambda)^n$, as in~\eqref{eq:B-bound}.
\end{proof}

\begin{proof}[of Theorem \ref{thm:hs-linear}]
To apply Lemma~\ref{lem:bd-zeta2}, we take $\lambda = 2na(4d+4)^n$ and use Proposition~\ref{prop:Ms-linear} to compute $M_s \pmod{p^\lambda}$ for $1 \leq s \leq 2\lambda$.
The total time is
\begin{multline*}
 (n + 2d\lambda)^{3n} \lambda^3 \log^{1+\eps}(2\lambda) a^{1+\eps} p\log^{1+\eps} p \\
   \begin{split}
   & = (5nda(4d+4)^n)^{3n} (na (4d+4)^n)^3 \log^{1+\eps}(4na(4d+4)^n) a^{1+\eps} p\log^{1+\eps} p \\
   & = 2^{6n^2 + 13n} n^{3n+4+\eps} (d+1)^{3n^2+6n+\eps} a^{3n+4+\eps} p \log^{1+\eps} p,
   \end{split}
\end{multline*}
and the total space is
\begin{align*}
 O((n + 2d\lambda)^{2n} \lambda^2 a \log p) 
 & = O((5dna(4d+4)^n)^{2n} (na(4d+4)^n)^2 a \log p) \\
 & = O(2^{4n^2+9n} n^{2n+2} (d+1)^{2n^2+4n} a^{2n+3} \log p).
\end{align*}
These dominate the contributions from Lemma~\ref{lem:bd-zeta2}.
\end{proof}

\subsection{Square-root time algorithm}

To reduce the time complexity from $p^{1+\eps}$ to $p^{1/2+\eps}$, we will employ the following algorithm of Bostan, Gaudry and Schost.
\begin{lem}
\label{lem:BGS}
Let $m \geq 1$ and $\mu \geq 1$.
Let~$U(\ell)$ be an $m \times m$ matrix whose entries are linear polynomials in $(\ZZ_q/p^\mu \ZZ_q)[\ell]$, and let $Y_0 \in (\ZZ_q/p^\mu \ZZ_q)^m$.
Consider the recurrence $Y_{\ell+1} = U(\ell) Y_\ell$ for $\ell \geq 0$. For any $1 \leq \ell_0 \leq p - 1$, we may compute~$Y_{\ell_0}$ in time
 \[ m^3 \mu \log^{1+\eps}(2\mu) a^{1+\eps} p^{1/2} \log^{2+\eps} p \]
and space
 \[ O(m^2 \mu a p^{1/2} \log p). \]
\end{lem}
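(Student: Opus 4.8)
The plan is to reduce the computation of $Y_{\ell_0}$ to evaluating a matrix-valued polynomial at many points, and then to apply the Bostan--Gaudry--Schost ``baby-step/giant-step'' machinery for shifting the evaluations of such a polynomial. First I would write $Y_{\ell_0} = U(\ell_0 - 1) U(\ell_0 - 2) \cdots U(0) Y_0$, so the core task is to compute the matrix product $P(\ell_0) := U(\ell_0 - 1) \cdots U(0)$, or really just its action on the fixed vector $Y_0$. Set $k = \lceil \ell_0^{1/2} \rceil$ (note $k = O(p^{1/2})$), and group the factors into blocks of length $k$: define the matrix polynomial $V(X) = U(X + k - 1) U(X + k - 2) \cdots U(X)$, whose entries lie in $(\ZZ_q/p^\mu\ZZ_q)[X]$ and have degree at most $k$. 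Then $P(k^2)$ is the product $V((k-1)k) V((k-2)k) \cdots V(k) V(0)$, and $Y_{\ell_0}$ is obtained by first applying this to $Y_0$ and then multiplying by the $O(k)$ leftover linear factors $U(\ell_0 - 1), \ldots, U(k^2)$, which costs $O(k)$ matrix-vector products, i.e.\ $O(m^2 k)$ ring operations --- negligible.

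The heart of the argument is to compute the $k$ matrices $V(0), V(k), V(2k), \ldots, V((k-1)k)$ simultaneously and cheaply. This is exactly the setting of the Bostan--Gaudry--Schost algorithm: $V(X)$ is a matrix whose $m^2$ entries are univariate polynomials of degree $\le k$; once we know $V$ at $k+1$ points we can, for the cost of a constant number of FFT-based ``shift of evaluation points'' operations, obtain its values at the arithmetic progression $0, k, \ldots, (k-1)k$. To get the initial $k+1$ evaluations of $V$, I would build $V$ by a product tree: the leaves are the $k$ linear matrices $U(X), U(X+1), \ldots, U(X+k-1)$ (evaluated at the $k+1$ sample points, each a single substitution), and each internal node multiplies two matrix polynomials of degree $\le k/2$ at $k+1$ points pointwise, which is $O(k)$ matrix multiplications, hence $O(m^3 k)$ ring operations per level and $O(m^3 k \log k)$ over the $O(\log k)$ levels. (One could also build $V(X)$ as an honest polynomial first via the tree and then do a single multipoint evaluation; either way the ring-operation count is $m^3 k^{1+\eps}$.) Finally, evaluating the product $V((k-1)k)\cdots V(0)$ against $Y_0$ is another $O(k)$ matrix-vector products, $O(m^2 k)$ ring operations. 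Summing, the number of ring operations in $\ZZ_q/p^\mu\ZZ_q$ is $m^3 k^{1+\eps} = m^3 p^{1/2+\eps}$, and translating into bit operations via the cost $\mu \log^{1+\eps}(2\mu)(a\log p)^{1+\eps}$ of one such ring operation gives the stated time bound $m^3 \mu \log^{1+\eps}(2\mu) a^{1+\eps} p^{1/2}\log^{2+\eps} p$. For space: at any moment we store $O(k)$ matrices of size $m^2$ over $\ZZ_q/p^\mu\ZZ_q$ (one level of the product tree, or the $k+1$ pointwise values), which is $O(m^2 k \mu a \log p) = O(m^2 \mu a p^{1/2}\log p)$ bits; the product tree is processed level by level, reusing space, and transposition steps between row-major and column-major layouts are free by the remarks in Section~\ref{sec:complexity}.

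The main obstacle is bookkeeping rather than any genuine mathematical difficulty: one must check that the Bostan--Gaudry--Schost ``point-shifting'' step really applies over the ring $\ZZ_q/p^\mu\ZZ_q$ rather than a field. The shifting algorithm divides by the differences of the evaluation points and by factorials up to $k! \sim (p^{1/2})!$; since $k \le p - 1 < p$ these quantities are units modulo $p$, hence units in $\ZZ_q/p^\mu\ZZ_q$, so the divisions are legitimate --- this is precisely why the hypothesis $\ell_0 \le p - 1$ is imposed. The underlying polynomial multiplications of degree-$O(k)$ polynomials over $\ZZ_q/p^\mu\ZZ_q$ are done by Kronecker substitution plus the fast arithmetic of Section~\ref{sec:complexity}, at $k^{1+\eps}$ ring operations each, and the $\log^{1+\eps}(2\mu)$ factor is carried through honestly so that the final bound matches the stated form (rather than the cruder $(\mu a \log p)^{1+\eps}$). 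Once these integrality and precision points are verified, the complexity accounting is routine.
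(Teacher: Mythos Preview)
Your proposal is correct and takes essentially the same approach as the paper: the paper simply cites \cite[Theorem~14]{BGS-recurrences} and checks the invertibility hypothesis (that $1,2,\ldots,\lfloor\sqrt{p-1}\rfloor+1$ are units in $\ZZ_q/p^\mu\ZZ_q$), while you have unpacked what that theorem actually does. Your verification that the required divisors are coprime to~$p$ because $k = O(p^{1/2})$ is exactly the point the paper makes, and your ring-operation and bit-complexity accounting match the paper's remark that the bound comes from degree-$O(p^{1/2})$ polynomial multiplication over $\ZZ_q/p^\mu\ZZ_q$ together with $m\times m$ matrix multiplication.
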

\begin{proof}
This is a special case of~\cite[Theorem~14]{BGS-recurrences}.
The invertibility hypothesis of that theorem holds because the integers $1, 2, \ldots, \lfloor \sqrt{p - 1} \rfloor + 1$ are not divisible by $p$ (unless $p = 2$, in which case the lemma is trivial).
We remark that $\mu \log^{1+\eps}(2\mu) a^{1+\eps} p^{1/2} \log^{2+\eps} p$ is the cost of multiplying polynomials of degree $O(p^{1/2})$ over $\ZZ_q/p^\mu \ZZ_q$, and the~$m^3$ term arises from matrix multiplication.
\end{proof}

\begin{prop}
\label{prop:Ms-sqrt}
Let $\lambda \geq (n + 1)/2$ and $1 \leq s \leq 2\lambda$.
Assume that $p \ndivides d$.
Then $M_s \pmod{p^\lambda}$ may be computed in time
 \[ (n + 2d\lambda)^{4n} \lambda^2 \log^{1+\eps}(2\lambda) a^{1+\eps} p^{1/2} \log^{2+\eps} p \]
and space
 \[ O((n + 2d\lambda)^{2n} \lambda a p^{1/2} \log p). \]
\end{prop}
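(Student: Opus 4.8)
The plan is to re-run the argument of Proposition~\ref{prop:Ms-linear} essentially verbatim, replacing every use of the naive ``multiply the linear factors one at a time'' strategy by the Bostan--Gaudry--Schost algorithm of Lemma~\ref{lem:BGS}. Fix $v \in B_{ds}$ and retain the notation established before Proposition~\ref{prop:Ms-linear}: computing the $v$-th row of $M_s \pmod{p^\lambda}$ amounts to evaluating the factorial~\eqref{eq:prod1} and the power $d^{(p-1)s}$ modulo $p^{\lambda_1}$, the factorials~\eqref{eq:prod1} and~\eqref{eq:prod2} modulo $p^{\lambda_2}$, and the matrix-vector product~\eqref{eq:prod3} modulo $p^{\lambda_1}$, after which one divides out the denominator (using $v_p(((p-1)s)!) \leq s$, absorbed by the padding $\lambda_1 = \lambda+s$, $\lambda_2 = \lambda+2s$, both $O(\lambda)$ since $s \leq 2\lambda$) and reads off the entries from $[F^{(p-1)s}]_{p-1}$.

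The main term is the matrix-vector product~\eqref{eq:prod3}, which I would treat as follows. It is the value at index $(p-1)s$ of the linear recurrence $Y_{\ell+1} = Q(p-1,\ell)Y_\ell$, $Y_0 = [G^{(p-1)s}]_{p-1}$, over $\ZZ_q/p^{\lambda_1}\ZZ_q$, whose $m \times m$ matrix, with $m = |B_h|$, has entries that are linear polynomials in~$\ell$ with coefficients read directly off~\eqref{eq:defn-Q}. Since $(p-1)s$ may exceed $p-1$, I would split the index range $0 \leq \ell < (p-1)s$ into~$s$ consecutive blocks of length at most $p-1$ and apply Lemma~\ref{lem:BGS} to each; on the $j$-th block one substitutes $\ell = j(p-1)+\ell'$, which keeps the entries linear in~$\ell'$, and the invertibility hypothesis of the lemma is unaffected since it concerns only the sample points $1, \ldots, \lfloor\sqrt{p-1}\rfloor+1$. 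Each block costs $m^3 \lambda \log^{1+\eps}(2\lambda) a^{1+\eps} p^{1/2}\log^{2+\eps}p$ with reusable space $O(m^2\lambda a p^{1/2}\log p)$, so over the $s = O(\lambda)$ blocks this product costs $m^3 \lambda^2 \log^{1+\eps}(2\lambda) a^{1+\eps} p^{1/2}\log^{2+\eps}p$ in time.

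The factorial~\eqref{eq:prod1} is the scalar ($m=1$) case of the same blocked recurrence, and~\eqref{eq:prod2} is handled by computing each $y_i!$ as a product of at most $(p-1)s$ linear factors in the same way and multiplying the $n+1$ results; summing~\eqref{eq:prod2} over the $|B_h|$ values of~$t$, and computing $d^{(p-1)s}$ by binary powering, both have cost dominated by~\eqref{eq:prod3}. Looping over all $v \in B_{ds}$ multiplies the time by $|B_{ds}|$ while the space stays the same by reuse (and $M_s$ itself fits in $O(|B_{ds}|^2\lambda a\log p)$, with no $p^{1/2}$). As in Proposition~\ref{prop:Ms-linear} one has $ds \leq 2d\lambda$ and $h \leq 2d\lambda$, hence $|B_h|, |B_{ds}| \leq (n+2d\lambda)^n$; substituting $m \leq (n+2d\lambda)^n$ and the extra factor $|B_{ds}| \leq (n+2d\lambda)^n$ from the loop gives the stated time $(n+2d\lambda)^{4n}\lambda^2\log^{1+\eps}(2\lambda)a^{1+\eps}p^{1/2}\log^{2+\eps}p$ and space $O((n+2d\lambda)^{2n}\lambda a p^{1/2}\log p)$.

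I expect the only real point requiring care to be the blocking: one must split the length-$(p-1)s$ recurrence into pieces no longer than $p-1$ so that each invocation of Lemma~\ref{lem:BGS} is legitimate, check that the affine shift $\ell \mapsto j(p-1)+\ell'$ preserves linearity of the matrix entries and does not touch the invertibility hypothesis, and keep track that the inflated precision $\lambda_1, \lambda_2$ remains $O(\lambda)$. Everything else is a mechanical copy of Proposition~\ref{prop:Ms-linear} with the Bostan--Gaudry--Schost cost substituted for the naive one.
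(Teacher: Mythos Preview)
Your proposal is correct and follows essentially the same approach as the paper: both split the length-$(p-1)s$ products into $s$ blocks of length at most $p-1$ and apply Lemma~\ref{lem:BGS} to each block, first in the scalar case for the factorials~\eqref{eq:prod1} and~\eqref{eq:prod2}, then with $m=|B_h|$ for the matrix product~\eqref{eq:prod3}, and then sum over $v\in B_{ds}$. Your remarks about the affine shift $\ell\mapsto j(p-1)+\ell'$ preserving linearity and not disturbing the invertibility hypothesis are correct and make explicit a point the paper leaves implicit.
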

\begin{proof}
We use the same setup as in the proof of Proposition~\ref{prop:Ms-linear}.

To compute~\eqref{eq:prod1}, we apply Lemma~\ref{lem:BGS} with $m = 1$, $\mu = \lambda_1$, $U(\ell) = \ell + 1$, $Y_0 = 1$ and $q = p$.
Then $Y_\ell = \ell!$ for $\ell \geq 0$, and we may compute $(p-1)! \pmod{p^{\lambda_1}}$ in time
 \[ \lambda \log^{1+\eps}(2\lambda) p^{1/2} \log^{2+\eps} p \]
and space $O(\lambda p^{1/2} \log p)$.
Repeating this~$s$ times, and replacing~$Y_0$ by the accumulated product after each invocation, we obtain $((p-1)s)! \pmod{p^{\lambda_1}}$ in time
 \[ \lambda^2 \log^{1+\eps}(2\lambda) p^{1/2} \log^{2+\eps} p. \]
The space may be reused.

(One may save a factor of $s^{1/2}$ in time by treating the whole product of length $O(ps)$ in one pass, at the expense of introducing complications involving the invertibility hypotheses; see for example~\cite[p.~1798]{BGS-recurrences}.)

For~\eqref{eq:prod2} we use the same strategy.
For each $t \in B_h$ we must compute~$n + 1$ factorials of length at most $(p-1)s$.
The time complexity is
 \[ n |B_h| \lambda^2 \log^{1+\eps}(2\lambda) p^{1/2} \log^{2+\eps} p. \]
The space complexity is $O(\lambda p^{1/2} \log p)$, plus $O(|B_h| \lambda \log p)$ to store the output.

Finally, for~\eqref{eq:prod3} we take $m = |B_h|$, $\mu = \lambda_1$, $U(\ell) = Q(p-1, \ell) \pmod{p^{\lambda_1}}$ and $Y_0 = [G^{(p-1)s}]_{p-1}$.
Splitting again into~$s$ subproducts, the time complexity is
 \[ |B_h|^3 \lambda^2 \log^{1+\eps}(2\lambda) a^{1+\eps} p^{1/2} \log^{2+\eps} p, \]
which dominates the contributions from~\eqref{eq:prod1} and~\eqref{eq:prod2}.
The space complexity is
 \[ O(|B_h|^2 \lambda a p^{1/2} \log p). \]
This may be reused for each~$v$, and includes the space required for the final output.

Summing over $v \in B_{ds}$ yields the desired bounds, analogously to the proof of Proposition~\ref{prop:Ms-linear}.
\end{proof}

\begin{proof}[of Theorem~\ref{thm:hs-sqrt}]
Identical to the proof of Theorem~\ref{thm:hs-linear}, using Proposition~\ref{prop:Ms-sqrt} instead of Proposition~\ref{prop:Ms-linear} to compute the $M_s \pmod{p^\lambda}$.
\end{proof}

\subsection{Average polynomial time algorithm}

Next we prove Theorem~\ref{thm:hs-avgpoly}.
The key tool is the following lemma, which is a generalisation of the ``accumulating remainder tree for matrices'' of~\cite[Proposition~4]{Har-avgpoly}.
The main difference is that here we work with matrices whose entries are truncated power series over~$\ZZ$, instead of simply integers.
The proof is otherwise essentially identical.
We also bound the space complexity, which was ignored in~\cite{Har-avgpoly}.

Recall that in Section~\ref{sec:intro}, for $H \in \ZZ[x_0, \ldots, x_n]$ we defined $\norm H = \max_u |H_u|$.
If $\beta \geq 1$ and $h \in \ZZ[k]/k^\beta$, say
 \[ h = h_0 + h_1 k + \cdots + h_{\beta-1} k^{\beta-1} \pmod{k^\beta}, \]
we define $\norm h = \sum_{i=0}^{\beta-1} |h_i|$.
Note that this norm is submultiplicative, i.e.~if $h, h' \in \ZZ[k]/k^\beta$, then $\norm{hh'} \leq \norm h \norm{h'}$.
If~$E$ is a matrix with entries in $\ZZ[k]/k^\beta$, we define $\norm{E} = \max_j \sum_i \norm{E_{ij}}$, i.e., the maximum of the~$L^1$ norms of the columns of~$E$.
This norm satisfies $\norm{E E'} \leq \norm{E} \norm{E'}$ (the proof is easy; see~\cite[\S2]{Har-avgpoly}).

\begin{lem}
\label{lem:avgpoly}
Let $m \geq 1$, $\beta \geq 1$, $\mu \geq 1$, $N \geq 2$, and let $\rho \in \RR$, $\rho > 1$.
We are given as input a sequence of $m \times m$ matrices $E_1, \ldots, E_{N-1}$, with entries in $\ZZ[k]/k^\beta$, such that $\log \norm{E_j} \leq \rho$ for all~$j$.
Then we may compute
 \[ E_{p-1} \cdots E_2 E_1 \pmod{p^\mu} \]
for all primes $p < N$ simultaneously in time
 \[ m^3 \beta (\mu + \rho) N \log N \log^{1+\eps}(\beta \mu \rho N) \]
and space
 \[ O(m^2 \beta (\mu + \rho) N \log N). \]
\end{lem}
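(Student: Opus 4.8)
The plan is to adapt the accumulating remainder tree (ART) of \cite[\S2]{CGH-wilson} and \cite[Proposition~4]{Har-avgpoly} to the coefficient ring $\ZZ[k]/k^\beta$ in place of~$\ZZ$, keeping track of the noncommutativity of matrix multiplication and of the extra power-series variable~$k$. I would build a balanced binary tree of depth $O(\log N)$ with leaves indexed by $j = 1, \ldots, N-1$, where leaf~$j$ carries the matrix~$E_j$ and the integer $m_j := j^\mu$ if $j$ is prime and $m_j := 1$ otherwise, and then form three trees on this structure: the \emph{product tree}~$\mathcal P$, whose node spanning leaves $[i,j]$ holds $E_j E_{j-1} \cdots E_i$ over $\ZZ[k]/k^\beta$; the \emph{modulus tree}~$\mathcal M$, whose node spanning $[i,j]$ holds $\prod_{i \le l \le j} m_l \in \ZZ$; and the \emph{remainder tree}~$\mathcal C$, built top-down, where the root holds the identity matrix and a node holding~$P$ passes $P \bmod m_{\mathrm{left}}$ to its left child and $(P_{\mathrm{left}} P) \bmod m_{\mathrm{right}}$ to its right child, with $m_{\mathrm{left}}, m_{\mathrm{right}}$ read off~$\mathcal M$ and $P_{\mathrm{left}}$ read off~$\mathcal P$. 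An induction down the tree, identical to the cited references, shows that the value deposited at leaf~$p$ is $E_{p-1} \cdots E_1 \bmod p^\mu$; the only new ingredient is that reduction modulo $k^\beta$ commutes with multiplication, so the truncated products computed by the algorithm agree with the genuine ones modulo $k^\beta$.

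The complexity analysis rests on one observation: since $\ZZ[k]/k^\beta$ is closed under multiplication, the degree in~$k$ of every matrix appearing in the algorithm stays below~$\beta$, so~$\beta$ enters the cost only as a multiplicative factor, never through degree growth. Combined with the submultiplicativity of $\norm{\cdot}$ and the hypothesis $\log\norm{E_j} \le \rho$, this shows that a node of~$\mathcal P$ spanning~$L$ leaves holds a matrix whose entries are polynomials of degree $<\beta$ with coefficients of $O(\rho L)$ bits; a node of~$\mathcal M$ spanning~$L$ leaves holds an integer of $O(\mu L)$ bits (using $\sum_{p < N} \log p = O(N)$); and a node of~$\mathcal C$ spanning~$L$ leaves holds a matrix whose entries have coefficients of $O(\mu L)$ bits. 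Each level of the tree therefore occupies $O(m^2 \beta \rho N)$, $O(\mu N)$ and $O(m^2 \beta \mu N)$ bits respectively, so storing all $O(\log N)$ levels of all three trees gives the stated space bound $O(m^2\beta(\mu+\rho)N\log N)$.

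For the time bound I would proceed level by level. Forming one node of~$\mathcal P$, or forming $P_{\mathrm{left}} P$ and reducing it modulo $m_{\mathrm{right}}$ at one node of~$\mathcal C$, costs $m^3$ multiplications and $m^2$ divisions with remainder of polynomials of degree $<\beta$ over~$\ZZ$ whose coefficients have $O((\mu+\rho)L)$ bits; by Kronecker substitution each such operation costs $\beta (\mu+\rho) L \log^{1+\eps}(\beta \mu \rho N)$ bit operations. There are $O(N/L)$ nodes spanning~$L$ leaves, so the cost per level is $O(m^3 \beta (\mu+\rho) N \log^{1+\eps}(\beta \mu \rho N))$, and over the $O(\log N)$ levels we obtain the claimed time bound. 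Building~$\mathcal M$, enumerating the primes $p < N$ (Section~\ref{sec:complexity}), the transposition steps, and extracting the leaf values are all cheaper and do not affect the result.

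The ART itself is well established, so the main obstacle is not conceptual but bookkeeping: one has to push the interacting parameters $m, \beta, \mu, \rho, N$ through the level-by-level size and cost estimates, confirm that the assorted logarithmic factors genuinely collapse into a single $\log^{1+\eps}(\beta\mu\rho N)$ (in particular that the $\log\beta$ coming from coefficient growth in intermediate polynomial products is absorbed), and check that it is $\mu + \rho$ rather than $\mu\rho$ that appears in the final bounds --- which is exactly where the observation that truncation in~$k$ pins the $k$-degree at~$\beta$ is used. One must also be careful with the off-by-one conventions relating leaves to partial products and with the placement of the noncommuting factors, exactly as in \cite{Har-avgpoly}.
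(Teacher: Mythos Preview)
Your proposal is correct and follows essentially the same route as the paper: three trees (modulus, value/product, accumulating remainder) built over a balanced binary structure of depth $O(\log N)$, with the top-down remainder recursion $A_{\mathrm{left}} = A \bmod M_{\mathrm{left}}$, $A_{\mathrm{right}} = V_{\mathrm{left}} A \bmod M_{\mathrm{right}}$, and a level-by-level size analysis via submultiplicativity of $\norm{\cdot}$ giving per-level sizes $O(m^2\beta\rho N)$, $O(\mu N)$, $O(m^2\beta\mu N)$ respectively. Your emphasis on the truncation in $k$ pinning the degree at $\beta$ is exactly the point the paper makes when it says the only change from \cite{Har-avgpoly} is replacing $\ZZ$ by $\ZZ[k]/k^\beta$.
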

\begin{proof*}
We will construct several binary trees of depth $\ell = \lceil \log_2 N \rceil$, with nodes indexed by the pairs $(i, t)$ with $0 \leq i \leq \ell$ and $0 \leq t < 2^i$.
The root node is at level~$i = 0$ and the leaf nodes are at level~$i = \ell$.
The children of $(i, t)$ are $(i+1, 2t)$ and $(i+1, 2t+1)$.
To each node $(i, t)$ we associate the set
 \[ S_{i,t} = \left\{ j \in \ZZ : t \frac{N}{2^i} \leq j < (t+1)\frac{N}{2^i} \right\}. \]
At level~$i$, the sets $S_{i,t}$ partition $\{0, 1, \ldots, N - 1\}$ into~$2^i$ sets of roughly equal size.
At the top level we have $S_{0,0} = \{0, \ldots, N-1\}$.
At level~$\ell$ we have $|S_{\ell,t}| \leq 1$ for every~$t$, and for every $0 \leq j < N$, there is exactly one~$t$, namely $t = \lfloor 2^\ell j / N \rfloor$, such that $S_{\ell,t}= \{j\}$.
For $0 \leq i < \ell$ we have the disjoint union $S_{i,t} = S_{i+1,2t} \cup S_{i+1, 2t+1}$.
We write $P_{i,t}$ for the set of primes in $S_{i,t}$.

The first tree is the \emph{modulus tree}, defined by
 \[ M_{i,t} = \prod_{p \in P_{i,t}} p^\mu \in \ZZ. \]
To compute the modulus tree we use a standard product tree algorithm~\cite{Ber-fastmult}.
We assume that the primes up to~$N$ are known.
For each leaf node either $M_{\ell,t} = 1$, or $M_{\ell,t} = p^\mu$ for an appropriate~$p$.
Starting from the leaf nodes, we repeatedly use the identity $M_{i,t} = M_{i+1,2t} M_{i+1,2t+1}$ to work up to the root.
At each level the total space occupied by the $M_{i,t}$ is $O(\sum_{p < N} \log(p^\mu)) = O(\mu N)$, so each level takes time $\mu N \log^{1+\eps}(\mu N)$.
The time cost over the whole tree is
 \[ \mu N \log N \log^{1+\eps}(\mu N), \]
and the space occupied by the tree is
 \[ O(\mu N \log N). \]

Next we define the \emph{value tree} by
 \[ V_{i,t} = \prod_{j \in S_{i,t}} E_j. \]
Here the~$E_j$ are multiplied in descending order, as in the statement of the lemma, and for convenience we put $E_0 = I$ (the identity matrix).
Again we use a product tree to compute the $V_{i,t}$.
The space occupied by a single~$E_j$ is $O(m^2 \beta \rho)$, and by submultiplicativity, each level requires space $O(m^2 \beta \rho N)$.
The time to compute each level is $m^3 \beta \rho N \log^{1+\eps}(\beta \rho N)$, where the~$m^3$ term arises from matrix multiplication.
The whole tree is computed in time
 \[ m^3 \beta \rho N \log N \log^{1+\eps}(\beta \rho N) \]
and occupies space
 \[ O(m^2 \beta \rho N \log N). \]

Finally we define the \emph{accumulating remainder tree} by
 \[ A_{i,t} = V_{i,t-1} \cdots V_{i,1} V_{i,0} \pmod{M_{i,t}}. \]
The leaf nodes contain the desired output, i.e., for any $p < N$, choosing~$t$ so that $S_{\ell,t} = \{p\}$, we have $P_{\ell,t} = p^\mu$ and $A_{\ell,t} = E_{p-1} \cdots E_1 \pmod{p^\mu}$.
To compute the $A_{i,t}$, we start with $A_{0,0} = I$, and work downwards via the relations
\begin{align*}
  A_{i+1,2t}   & = \phantom{V_{i+1,2t}} A_{i,t} \pmod{M_{i+1, 2t}}, \\
  A_{i+1,2t+1} & =          V_{i+1,2t}  A_{i,t} \pmod{M_{i+1,2t+1}}. 
\end{align*}
Each $A_{i,t}$ occupies space $O(m^2 |P_{i,t}| \beta \mu \log N)$, so the space required at each level is $O(m^2 \beta \mu N)$, and the time for each level is $m^3 \beta (\mu + \rho) N \log^{1+\eps}(\beta \mu \rho N)$.
Over the whole tree, the time cost is
 \[ m^3 \beta (\mu + \rho) N \log N \log^{1+\eps}(\beta \mu \rho N) \]
and the space occupied by the tree is
\multbox
\begin{eqnarray*}
 O(m^2 \beta (\mu + \rho) N \log N).
\end{eqnarray*}
\emultbox
\end{proof*}

Now we return to the setting of Theorem~\ref{thm:hs-avgpoly}.
Let $F \in \ZZ[x]_d$, and for each prime~$p$, let $\red F_p \in \FF_p[x]_d$ be the reduction of~$F$ modulo~$p$, and let~$X_p$ be the corresponding hypersurface in $\TT^n_{\FF_p}$.
For each~$p$ we may apply the results of Section~\ref{sec:trace} to~$\red F_p$ (taking~$a = 1$).
In particular, let $M_{p,s}$ be the matrix, previously denoted by~$M_s$, associated to~$\red F_p$ for each $s \geq 1$.
Then Lemma~\ref{lem:bd-zeta2} shows how to compute $Z_{X_p}(T)$ in terms of $M_{p,s} \pmod{p^\lambda}$ for suitable~$\lambda$ and sufficiently many~$s$.
To prove Theorem~\ref{thm:hs-avgpoly}, we must show, for each~$s$, how to efficiently compute $M_{p,s} \pmod{p^\lambda}$ for all $p < N$ simultaneously.

We will use the same framework discussed after the proof of Theorem~\ref{thm:deform}.
Let $s \geq 1$ and $v \in B_{ds}$.
Let~$h$, $z$ and $w$ be defined as before, and apply Theorem~\ref{thm:deform} with parameters~$s$, $d$, $h$, $v$, $w$, but now with $R = \ZZ$ instead of~$\ZZ_p$.
We obtain a certain matrix~$Q$, with entries in $\ZZ[k, \ell]$, such that
 \[ [F^{(p-1) s}]_{p-1} = \frac{1}{d^{(p-1)s} ((p-1)s)!} Q(p - 1, (p-1)s - 1) \cdots Q(p - 1, 0) [G^{(p-1)s}]_{p-1} \]
for every~$p$.
This is the same as~\eqref{eq:Qprod}, but now takes place over~$\ZZ$ instead of~$\ZZ_p$.
On the other hand, taking the image over~$\ZZ_p$, we see that the entries of $M_{p,s} \pmod{p^\lambda}$ may be extracted from $[F^{(p-1)s}]_{p-1} \pmod{p^\lambda}$ just as before, and then we may apply Lemma~\ref{lem:bd-zeta2} for each prime separately.
(In other words,~$F$ happens to be a lift of~$\red F_p$, for every~$p$.)

\begin{prop}
\label{prop:Ms-avgpoly}
Let $\lambda \geq (n + 1)/2$ and $1 \leq s \leq 2\lambda$.
Then $M_{p,s} \pmod{p^\lambda}$ may be computed simultaneously for all $p < N$, $p \ndivides d$, in time
 \[ (n + 2d\lambda)^{4n} n^{1+\eps} \lambda^2 N \log N \log(\lambda N) \log^{1+\eps}(d\lambda N\norm F) \]
and space
 \[ O((n + 2d\lambda)^{2n} n \lambda^2 N \log N \log(nd \lambda N \norm F)). \]
\end{prop}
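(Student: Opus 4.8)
The plan is to evaluate the products~\eqref{eq:prod1}, \eqref{eq:prod2} and~\eqref{eq:prod3} simultaneously for all primes $p < N$, $p \ndivides d$, using the accumulating remainder tree of Lemma~\ref{lem:avgpoly} in place of the naive evaluation used in Proposition~\ref{prop:Ms-linear} (or the Bostan--Gaudry--Schost evaluation used in Proposition~\ref{prop:Ms-sqrt}). The obstacle is that each of these products has a number of factors proportional to $(p-1)s$, hence depends on~$p$, whereas Lemma~\ref{lem:avgpoly} requires a single $p$-independent sequence $E_1, \ldots, E_{N-1}$ and returns $E_{p-1}\cdots E_1 \pmod{p^\mu}$. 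I resolve this with the ``generic prime'' device advertised in Section~\ref{sec:intro}: keep~$k$ as a formal variable standing for the prime, group the $(p-1)s$ factors into $p-1$ blocks of~$s$ consecutive factors, and specialise $k = p$ only at the end.

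Fix~$s$ and $v \in B_{ds}$ and adopt the notation established after Theorem~\ref{thm:deform}: $h$, $z$, $w = v+z$, and the matrix $Q \in \ZZ[k,\ell]^{B_h\times B_h}$ of~\eqref{eq:defn-Q}, whose entries are linear in~$k$. Substituting $k-1$ for the first argument of~$Q$ (which keeps the entries linear in~$k$), define, for $j \geq 1$,
\[
 E_j(k) = Q(k-1,\,js-1)\,Q(k-1,\,js-2)\cdots Q(k-1,\,(j-1)s),
\]
a $B_h\times B_h$ matrix of polynomials in~$k$ of degree at most~$s$, independent of~$p$. Since $Q(p-1,\ell)$ is the value of $Q(k-1,\ell)$ at $k = p$, identity~\eqref{eq:deform} gives $E_{p-1}(p)\cdots E_1(p) = Q(p-1,(p-1)s-1)\cdots Q(p-1,0)$, which is exactly the matrix in~\eqref{eq:prod3}. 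We only need this product modulo $p^{\lambda_1}$ after setting $k = p$, and a term of $k$-degree $\geq \lambda_1$ then contributes a multiple of $p^{\lambda_1}$; hence the whole computation may be carried out over $\ZZ[k]/k^{\lambda_1}$. We therefore apply Lemma~\ref{lem:avgpoly} with $m = |B_h|$, $\beta = \mu = \lambda_1 = O(\lambda)$, and the sequence $(E_j)$ above, each $E_j$ being assembled from~\eqref{eq:defn-Q} by repeated multiplication at a cost dominated by the tree. Submultiplicativity of $\norm{\cdot}$ gives $\log\norm{E_j} \leq s\max_\ell \log\norm{Q(k-1,\ell)}$, and the entries of~$Q$ are bounded in terms of $|B_d|$, $h$, $(p-1)s$ and $\norm F$, so one may take $\rho = O(n\lambda\log(nd\lambda N\norm F))$.

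It remains to supply the initial vector and the scalar denominator. The $t$-component of $[G^{(p-1)s}]_{p-1}$ is the multinomial coefficient $\binom{(p-1)s}{y}$ with $y = (pv + z - t)/d$ (understood as~$0$ when this is not a tuple of non-negative integers), and the denominator is $d^{(p-1)s}((p-1)s)!$; by the discussion after Theorem~\ref{thm:deform} it suffices to compute $((p-1)s)!$ and the products $y_0!\cdots y_n!$ modulo $p^{\lambda_2}$ with $\lambda_2 = O(\lambda)$, together with $d^{(p-1)s}$ by binary powering. Each factorial is again a product whose factors split into $p-1$ blocks of at most~$s$ terms: for $((p-1)s)!$ the blocks $\prod_{m=(j-1)s+1}^{js}m$ are constants, while for $y_i!$ the blocks are products of terms linear in~$k$. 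Hence each can be handled by an $m = 1$ instance of Lemma~\ref{lem:avgpoly}, contributing only a lower-order cost (an extra factor $|B_h|$ over the choices of~$t$ and $n+1$ over the indices~$i$). Then, for each~$p$, one forms the matrix--vector product~\eqref{eq:prod3} and divides by the denominator (the extra~$s$ digits of precision absorbing the valuation $\leq s$ of $((p-1)s)!$), and reads off the $v$-th row of $M_{p,s}\pmod{p^\lambda}$ exactly as in the paragraph preceding~\eqref{eq:Qprod}.

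Running this for every $v \in B_{ds}$ multiplies the cost of one call to Lemma~\ref{lem:avgpoly} by $|B_{ds}| \leq (n+2d\lambda)^n$; combined with $m^3 = |B_h|^3 \leq (n+2d\lambda)^{3n}$ this produces the factor $(n+2d\lambda)^{4n}$, and substituting $\beta = \mu = O(\lambda)$ and $\rho = O(n\lambda\log(nd\lambda N\norm F))$ into the bounds of Lemma~\ref{lem:avgpoly} yields the claimed time and space estimates, the space being reused between successive~$v$ and including the output. The main difficulty is exactly the block decomposition described above: fitting the $p$-dependent products into the fixed-length matrix sequence required by Lemma~\ref{lem:avgpoly} while keeping the working precision in~$k$ down to $O(\lambda)$. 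Once that is in place, the estimates follow by routine bookkeeping, closely paralleling the proofs of Propositions~\ref{prop:Ms-linear} and~\ref{prop:Ms-sqrt}.
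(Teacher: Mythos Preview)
Your approach is the same as the paper's: define $E_j(k) = Q(k-1, js-1)\cdots Q(k-1, (j-1)s) \pmod{k^{\lambda_1}}$, invoke Lemma~\ref{lem:avgpoly} with $m = |B_h|$ and $\beta = \mu = \lambda_1$ to obtain the matrix products~\eqref{eq:prod3} for all $p < N$, handle the scalar~\eqref{eq:prod1} by a separate $m=\beta=1$ call with integer blocks $E_j = \prod_{m=(j-1)s+1}^{js} m$, and specialise $k = p$ at the end. Your bound $\rho = O(n\lambda\log(nd\lambda N\norm F))$ and the resulting bookkeeping match the paper.

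The one place where your sketch is not quite right is the computation of $y_0!\cdots y_n!$ in~\eqref{eq:prod2}. You assert that the blocks for $y_i!$ are ``products of terms linear in $k$'', but this does not obviously work: the number of factors in $y_i!$ is $y_i(p) = (pv_i + z_i - t_i)/d$, which need not split into $p-1$ blocks of a size independent of~$p$ (that would require $d \mid v_i$), and the expression only makes sense for ``relevant'' primes, namely those for which every $y_i(p)$ is a non-negative integer. The paper handles this with a small device that avoids $k$ altogether: set $\tilde y_i(j) = \max(0, \lfloor ((j-1)v_i + w_i - t_i)/d \rfloor)$, a non-decreasing integer-valued function of~$j$, and take $E_j = \prod_i \prod_{\ell = \tilde y_i(j)+1}^{\tilde y_i(j+1)} \ell \in \ZZ$ (so $\beta = 1$). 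Then $E_{p-1}\cdots E_1 = \prod_i \tilde y_i(p)!$, which equals $\prod_i y_i(p)!$ precisely for the relevant primes; for the others the multinomial coefficient vanishes anyway. The cost of this step is dominated by the matrix step, so your final bounds are unaffected, but the mechanism you describe for~\eqref{eq:prod2} needs to be replaced by this one.
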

\begin{proof}
Continuing the argument above, we must show how to evaluate~\eqref{eq:prod1},~\eqref{eq:prod2} and~\eqref{eq:prod3} for all $p < N$ simultaneously.

For~\eqref{eq:prod1}, we will apply Lemma~\ref{lem:avgpoly} with $m = \beta = 1$, $\mu = \lambda_1$ and
 \[ E_j = (js) (js - 1) \cdots (js - s + 1) \in \ZZ \]
for $1 \leq j < N$, so that $E_{p-1} \cdots E_1 = ((p-1)s)!$.
Note that $\norm{E_j} \leq (sN)^s$, so we may take $\rho = O(s \log(sN)) = O(\lambda \log(\lambda N))$.
Thus we may compute $((p-1)s)! \pmod{p^{\lambda_1}}$ for all $p < N$ in time
 \[ \lambda \log(\lambda N) N \log N \log^{1+\eps}(\lambda^2 \log(\lambda N) N) = \lambda N \log N \log^{2+\eps}(\lambda N) \]
and space
 \[ O(\lambda N \log N \log(\lambda N)).  \]
This also covers the time required to compute the~$E_j$ themselves.

Next consider~\eqref{eq:prod2}.
Let $t \in B_h$, and for $j \geq 1$ and $0 \leq i \leq n$ let
 \[ y_i(j) = \frac{(j-1)v_i + w_i - t_i}{d}. \]
Declare a prime~$p$ to be \emph{relevant} (for this~$t$) if~$y_i(p)$ is non-negative and integral for all~$i$.
We must show how to compute
 \[ y_0(p)! \cdots y_n(p)! \pmod{p^{\lambda_2}}, \]
for all relevant $p < N$.
We set up a recurrence for this as follows.
Define
 \[ \tilde y_i(j) = \max(0, \lfloor y_i(j) \rfloor). \]
Then each~$\tilde y_i$ is a non-decreasing function of~$j$, and for all relevant~$p$ we have $y_i(p) = \tilde y_i(p)$ for each~$i$.
Let
 \[ E_1 = \tilde y_0(2)! \cdots \tilde y_n(2)! \]
and let
 \[ E_j = \frac{\tilde y_0(j+1)! \cdots \tilde y_n(j+1)!}{\tilde y_0(j)! \cdots \tilde y_n(j)!} = \prod_{i=0}^n \left(\prod_{\ell=\tilde y_i(j) + 1}^{\tilde y_i(j+1)} \ell\right) \]
for $2 \leq j < N$. Then
 \[ E_{p-1} \cdots E_1 = \tilde y_0(p)! \cdots \tilde y_n(p)! = y_0(p)! \cdots y_n(p)! \]
for each relevant~$p$.
To estimate the size of~$E_j$, note that each~$\ell$ in the above product satisfies
 \[ \ell \leq \frac{pds + h}{d} \leq ps + 2\lambda = O(\lambda N), \]
and
\begin{align*}
 \sum_{i=0}^n \tilde y_i(j+1) - \tilde y_i(j)
    & \leq \sum_i \left\lfloor \frac{jv_i + w_i - t_i}{d} \right\rfloor - \left\lfloor \frac{(j - 1)v_i + w_i - t_i}{d} \right\rfloor \\
    & \leq \sum_i (1 + v_i/d) = (n + 1) + s = O(\lambda),
\end{align*}
so
 \[ \log |E_j| = O(\lambda \log(\lambda N)) \]
for $2 \leq j < N$.
A similar argument leads to the same bound for $\log |E_1|$.
Applying Lemma~\ref{lem:avgpoly} with $m = \beta = 1$, $\mu = \lambda_2$, $\rho = O(\lambda \log(\lambda N))$, and the~$E_j$ just defined, we obtain $y_0(p)! \cdots y_n(p)! \pmod{p^{\lambda_2}}$ for all (relevant)~$p$ in time
 \[ \lambda N \log N \log^{2+\eps}(\lambda N) \]
and space
 \[ O(\lambda N \log N \log(\lambda N)).  \]
Summing over all $t \in B_h$, we may compute $[G^{(p-1) s}]_{p-1} \pmod{p^{\lambda_1}}$ for all $p < N$ in time
 \[ |B_h| \lambda N \log N \log^{2+\eps}(\lambda N) \]
and space
 \[ O(|B_h| \lambda N \log N \log(\lambda N)). \]

Finally, for~\eqref{eq:prod3} we apply Lemma~\ref{lem:avgpoly} with $m = |B_h|$, $\beta = \mu = \lambda_1$, and
 \[ E_j = Q(k-1, js-1) \cdots Q(k-1, js-s) \pmod{k^{\lambda_1}} \]
for $1 \leq j < N$.
Here we are regarding $Q(k-1, \ell)$, for each $0 \leq \ell < (N-1)s$, as a matrix over $\ZZ[k]/k^{\lambda_1}$.
To estimate $\norm{Q(k-1,\ell)}$, observe that in~\eqref{eq:defn-Q}, the variables~$v_i$, $w_i$, $t_i'$ and $z_i$ are non-negative integers bounded by~$h$, and we have $\ell+1 \leq sN = O(\lambda N)$ and $|F_{z-t'}| \leq \norm{F}$.
Thus each entry of $Q(k-1,\ell)$ has norm in $O(h\lambda N \norm F)$, and so
 \[ \norm{Q(k-1,\ell)} = O(|B_h|h\lambda N \norm F) = O((n + 2d\lambda)^n d \lambda^2 N \norm F). \]
Therefore
 \[ \log\norm{E_j} = O(\lambda \log((n + 2d\lambda)^n d \lambda^2 N \norm F)) = O(n \lambda \log(nd \lambda N \norm F)). \]
By Lemma~\ref{lem:avgpoly} we obtain
\begin{equation}
\label{eq:E-prod}
 E_{p-1} \cdots E_1 = Q(k-1, (p-1)s-1) \cdots Q(k-1, 0) \pmod{k^{\lambda_1}, p^{\lambda_1}}
\end{equation}
for all $p < N$ in time
\begin{multline*}
 |B_h|^3 \lambda (n\lambda \log(nd\lambda N \norm F)) N \log N \log^{1+\eps}(\lambda^2 n \lambda \log(nd\lambda N\norm F)N) \\
    = |B_h|^3 n^{1+\eps} \lambda^2 N \log N \log(\lambda N) \log^{1+\eps}(d\lambda N\norm F)
\end{multline*}
and space
 \[ O(|B_h|^2 n \lambda^2 N \log N \log(nd \lambda N \norm F)). \]
Then, for each $p < N$ separately, we may substitute~$p$ for~$k$ in~\eqref{eq:E-prod}, to obtain the desired matrix products.
The cost of the substitution is negligible.

All of the above must be repeated for each $v \in B_{ds}$; the time bound follows immediately.
For the space bound, observe that the space for each~$v$ may be reused, and that the space required for the output is only $O(|B_{ds}|^2 \lambda N$).
\end{proof}

\begin{proof*}[of Theorem~\ref{thm:hs-avgpoly}]
This is the same as the proof of Theorem~\ref{thm:hs-linear}, but we work on all~$p$ simultaneously.
Take $\lambda = 2n(4d+4)^n$ and use Proposition~\ref{prop:Ms-avgpoly} to compute $M_{p,s} \pmod{p^\lambda}$ for $1 \leq s \leq 2\lambda$ and all $p < N$, $p \ndivides d$.
The time complexity is
\begin{multline*}
 (n + 2d\lambda)^{4n} n^{1+\eps} \lambda^3 N \log N \log(\lambda N) \log^{1+\eps}(d\lambda N \norm F) \\
 \begin{split}
   & = (5nd(4d+4)^n)^{4n} n^{1+\eps} (n(4d+4)^n)^3 N \log N \\ & \qquad \qquad \log(n(4d+4)^n N) \log^{1+\eps}(nd(4d+4)^n N \norm F) \\
   & = 2^{8n^2 + 16n} n^{4n+6+\eps} (d+1)^{4n^2+7n+\eps} N \log^2 N \log^{1+\eps}(N \norm F),
 \end{split}
\end{multline*}
and the space complexity is
\begin{multline*}
 O((n + 2d\lambda)^{2n} n \lambda^3 N \log N \log(nd\lambda N \norm F)) \\
 \begin{split}
  & = O((5nd(4d+4)^n)^{2n} n (n(4d+4)^n)^3 N \log N \log(ndN\norm F)) \\
  & = O(2^{4n^2+11n} n^{2n+4} (d+1)^{2n^2+5n} N \log N \log(ndN \norm F)).
 \end{split}
\end{multline*}
Then we apply Lemma~\ref{lem:bd-zeta2} separately for each~$p$.
The time for this step is only
 \[ 2^{6n^2 + 13n} n^{3n+3+\eps} (d+1)^{3n^2+6n+\eps} N \log^{1+\eps} N, \]
and the space complexity is only
\multbox
\begin{eqnarray*}
 O(2^{4n^2 + 9n} n^{2n+2} (d+1)^{2n^2+4n} N).
\end{eqnarray*}
\emultbox
\end{proof*}

\section*{Acknowledgments}

Feedback from Alan Lauder led to considerable simplification of the statement and proof of the trace formula.
The author thanks Daniel Chan and Jesse Kass for helpful discussions on respectively arithmetic schemes and algebraic curves, Wouter Castryck, Alan Lauder and Andrew Sutherland for comments on a draft of the paper, and the referee for their suggestions.
The author was supported by the Australian Research Council, DECRA Grant DE120101293.

\bibliographystyle{amsalpha}
\bibliography{arithzeta}

\end{document}